\newtheorem{theorem}{Theorem}
\newtheorem{lemma}[theorem]{Lemma}
\newtheorem{prop}[theorem]{Proposition}
\newtheorem{cor}[theorem]{Corollary}
\newtheorem{definition}[theorem]{Definition}
\DeclareMathOperator{\IP}{IP}
\DeclareMathOperator{\rank}{rank}
\DeclareMathOperator{\corank}{corank}
\DeclareMathOperator{\SPAN}{span}
\newcommand{\Z}{\mathbb{Z}}
\newcommand{\N}{\mathbb{N}}
\newcommand{\F}{\mathbb{F}}
\newcommand{\pDe}[1]{\prescript{\partial}{}{\varepsilon_{#1}}}
\newcommand{\T}[1]{\prescript{\partial}{}{w_{#1}}}
\newenvironment{amatrix}[1]{%
  \left[\begin{array}{@{}*{#1}{c}|c@{}}
}{%
  \end{array}\right]
}
\title{A recursion for the twist polynomial of a one-point join of normal binary delta-matroids}
\author{Charlton Li \vspace{1mm} \\
\small \textit{The Ohio State University} \\
\small \textit{Columbus, Ohio, USA}}
\date{}
\begin{document}

\maketitle

\noindent
\rule{\linewidth}{.5pt}

\noindent
\textbf{Abstract}

\noindent
The partial-dual Euler-genus polynomial was defined by Gross, Mansour, and Tucker to analyze how the Euler genus of a ribbon graph changes under partial duality, a generalization of geometric duality introduced by Chmutov. The twist polynomial defined by Yan and Jin extends the partial-dual Euler-genus polynomial to a polynomial on delta-matroids. We derive a recursion formula for the twist polynomial of a one-point join of looped simple graphs---equivalently, normal, binary delta-matroids. Our recursion applies to the partial-dual Euler-genus polynomial as a special case, where it generalizes a recursion obtained by Yan and Jin. We obtain relations for the twist polynomial on looped simple graphs evaluated at $-1/2$ and for the twist polynomial of a graph with a single looped vertex. A characterization is given for the feasible sets of the delta-matroid associated to a one-point join of looped simple graphs. We show that Yan and Jin's recursion extends to the twist polynomial on delta-matroids. \\

\noindent
\textit{Keywords:} delta-matroid, twist polynomial, partial duality, ribbon graph

\noindent
\textit{2020 MSC:} 05B35, 05C31, 05C10

\noindent
\rule{\linewidth}{.5pt}

\section{Introduction}
Given a ribbon graph $\Gamma$, let $V(\Gamma)$ and $E(\Gamma)$ be the collection of vertices and edges of $\Gamma$ respectively. Chmutov \cite{Chmutov_2009} introduced the \textit{partial dual} $\Gamma^A$ with respect to an edge subset $A\subseteq E(\Gamma)$ as a generalization of geometric duality in order to unify various results that expressed the Jones polynomial as an evaluation of the Bollob\'{a}s-Riordan polynomial. The Euler genus $\varepsilon(\Gamma)$ of a ribbon graph is invariant under geometric duality, but not invariant under partial duality. To analyze how partial duality changes the Euler genus, Gross, Mansour, and Tucker defined the \textit{partial-dual Euler-genus polynomial} of $\Gamma$ in \cite{Gross_Mansour_Tucker_2020} as  
        \[\pDe{\Gamma}(z)=\sum_{A\subseteq E(\Gamma)}z^{\varepsilon(\Gamma^A)},\]
the generating function enumerating the edge subsets of $\Gamma$ by the Euler genus of their partial duals. It was observed in \cite{Gross_Mansour_Tucker_2020} that any connected ribbon graph has a partial dual that is a bouquet (one-vertex ribbon graph), so it suffices to focus on bouquets when studying the partial-dual Euler-genus polynomial, since $\pDe{\Gamma^A}(z)=\pDe{\Gamma}(z)$ for all $A\subseteq E(\Gamma)$ (this follows from the fact proved in \cite{Chmutov_2009} that $(\Gamma^A)^B=\Gamma^{A\triangle B}$ for all $B\subseteq E(\Gamma)$, where $\triangle$ is the symmetric difference). Yan and Jin \cite{Yan_Jin_2022} showed that the partial-dual Euler-genus polynomial of a bouquet is determined by its signed intersection graph.
\begin{prop}\label{prop:Yan_Jin_signed_intersection_graph}
    (Theorem 4.1 in \cite{Yan_Jin_2022}). If two bouquets $B_1$ and $B_2$ have isomorphic signed intersection graphs, then $\pDe{B_1}(z)=\pDe{B_2}(z).$
\end{prop}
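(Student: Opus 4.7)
The plan is to reduce the statement to a permutation-invariance fact about a certain symmetric matrix over $\F_2$. For a bouquet $B$ with loops $e_1,\ldots,e_n$, let $M(B)\in M_n(\F_2)$ be the symmetric matrix with $M(B)_{ii}=1$ iff $e_i$ is a non-orientable loop and, for $i\neq j$, $M(B)_{ij}=1$ iff $e_i$ and $e_j$ interlace along the boundary of the vertex disk. The signed intersection graph $I(B)$ is just this matrix repackaged as a vertex-labeled graph, so an isomorphism $I(B_1)\cong I(B_2)$ is exactly a bijection $\phi\colon E(B_1)\to E(B_2)$ whose permutation matrix $P$ satisfies $P\,M(B_1)\,P^{\top}=M(B_2)$.

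First I would establish (or invoke) the key formula: for every $A\subseteq E(B)$, the Euler genus $\varepsilon(B^A)$ is an invariant of the pair $(M(B),A)$ alone, i.e.\ $\varepsilon(B^A)=\rho_{M(B)}(A)$ for some function $\rho$ computed from the matrix. The cleanest route is via delta-matroids: one identifies $\varepsilon(B^A)$ with the width (largest minus smallest feasible set, or equivalently a corank) of the twist $D(M(B))*A$ of the normal binary delta-matroid attached to $M(B)$. Alternatively one may argue directly, iterating the classical identity $\varepsilon(B)=\rank_{\F_2}M(B)$ together with the fact that partial duality with respect to a single loop corresponds to a signed local complementation or pivot on $M(B)$.

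With the formula in hand, the proposition is immediate. Since $\rho$ is computed from $M(B)$ and is invariant under simultaneous permutation of rows and columns, the relation $P\,M(B_1)\,P^{\top}=M(B_2)$ gives $\rho_{M(B_1)}(A)=\rho_{M(B_2)}(\phi(A))$ for every $A\subseteq E(B_1)$. Reindexing the sum by $A'=\phi(A)$ then yields
\[\pDe{B_1}(z)=\sum_{A\subseteq E(B_1)}z^{\rho_{M(B_1)}(A)}=\sum_{A'\subseteq E(B_2)}z^{\rho_{M(B_2)}(A')}=\pDe{B_2}(z).\]

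The main obstacle is the rank/width formula for $\varepsilon(B^A)$. The subtlety is that $B^A$ is typically not a bouquet, so one cannot naively apply $\varepsilon=\rank M$ to $B^A$ itself; one must go through the delta-matroid (or, equivalently, reduce the many-edge case to iterated single-edge moves) in order to realize $\varepsilon(B^A)$ as a matrix invariant of $M(B)$. Once this compatibility between partial duality on ribbon graphs and the twist on the associated delta-matroid is set up, the rest of the argument is bookkeeping.
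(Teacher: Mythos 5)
Your proposal is correct and takes essentially the same route as the paper's own argument: the paper observes (footnote to \autoref{lemma:corank}) that $D(B)=D(I(B))$, which combined with \autoref{prop:ribbon_graphic_delta_matroid} (i.e.\ $\varepsilon(\Gamma)=w(D(\Gamma))$ and $D(\Gamma^A)=D(\Gamma)*A$) realizes $\varepsilon(B^A)$ as the permutation-equivariant matrix invariant $\rank M[A]+\rank M[A^c]$ of the adjacency matrix of $I(B)$, exactly as you outline. The only divergence is from the cited source: Yan and Jin's original proof is purely topological, whereas your delta-matroid argument is the one this paper itself advocates.
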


In view of \autoref{prop:Yan_Jin_signed_intersection_graph}, Yan and Jin \cite{Yan_Jin_2022} defined the \textit{intersection polynomial} $\IP_G(z)$ of a signed intersection graph $G$ to be the partial-dual Euler-genus polynomial of any bouquet whose signed intersection graph is isomorphic to $G$. They obtained the following recurrence relation for the intersection polynomial that applies to positive leaves, allowing one to (in particular) recursively compute the intersection polynomial of any positive tree.

\begin{prop}\label{prop:Yan_Jin_recurrence}
    (Theorem 5.2 in \cite{Yan_Jin_2022}). Let $G$ be a signed intersection graph and let $v_1,v_2$ be adjacent vertices of $G$ such that $v_1$ is positive and has degree 1. Then
    \[\IP_{G}(z)=\IP_{G-v_1}(z)+2z^2\IP_{G-v_1-v_2}(z).\]
\end{prop}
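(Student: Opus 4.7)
I would realize $G$ as the signed intersection graph of a bouquet $B$, so that $\IP_G(z)=\pDe{B}(z)$ by definition, and write $e_i$ for the loop of $B$ corresponding to the vertex $v_i$. Let $B'=B-e_1$ and $B''=B-e_1-e_2$; their signed intersection graphs are $G-v_1$ and $G-v_1-v_2$. The plan is to partition the defining sum
\[
\IP_G(z)=\sum_{A\subseteq E(B)} z^{\varepsilon(B^A)}
\]
according to $A\cap\{e_1,e_2\}$, reindex each of the four groups by $A\setminus\{e_1,e_2\}$, and compute $\varepsilon(B^A)$ in each case using the normal binary delta-matroid associated to $B$, which is encoded by the symmetric $\F_2$-matrix $C=C(G)$ whose off-diagonal entries record adjacency in $G$ and whose diagonal entries record the signs.

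The hypothesis that $v_1$ is a positive vertex of degree one adjacent to $v_2$ forces $C_{11}=0$, $C_{12}=C_{21}=1$, and $C_{1j}=0$ for $j\geq 3$. A cofactor expansion along the first row of $\det(C[F,F])$ shows that the feasible sets of the delta-matroid of $B$ split into two strata: those $F\subseteq E(B)$ with $e_1\notin F$ coincide with the feasible sets of the delta-matroid of $B'$, and those $F$ with $\{e_1,e_2\}\subseteq F$ are in bijection with the feasible sets of the delta-matroid of $B''$ via $F\mapsto F\setminus\{e_1,e_2\}$ (shifted up by two elements). Since partial duality of $B$ by $A$ corresponds to the twist of the delta-matroid by $A$, this stratification transports to every $B^A$ and yields the Euler-genus identity
\[
\varepsilon(B^A)=
\begin{cases}
\varepsilon\bigl(B''^{\,A\setminus\{e_1,e_2\}}\bigr)+2, & |A\cap\{e_1,e_2\}|\text{ even,}\\[2pt]
\varepsilon\bigl(B'^{\,A\setminus\{e_1\}}\bigr), & |A\cap\{e_1,e_2\}|\text{ odd.}
\end{cases}
\]

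Summing the two even cases over $A\setminus\{e_1,e_2\}\subseteq E(B'')$ produces $2z^2\,\IP_{G-v_1-v_2}(z)$. In the two odd cases, $A\setminus\{e_1\}$ runs over every subset of $E(B')$ exactly once (once with $e_2\notin A\setminus\{e_1\}$ coming from $A\cap\{e_1,e_2\}=\{e_1\}$, once with $e_2\in A\setminus\{e_1\}$ coming from $A\cap\{e_1,e_2\}=\{e_2\}$), yielding $\IP_{G-v_1}(z)$; adding the two pieces gives the stated recurrence. The hardest step is the even-case identity: while the odd case reduces to a size-preserving bijection between the two feasible-set strata and the feasible sets of $B'$ after twisting, the even case requires showing that $\{e_1,e_2\}$ contributes an independent width-two block to the delta-matroid width $\varepsilon(B^A)$. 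Here the hypothesis that $v_1$ is \textit{positive} is essential: a negative leaf would give $C_{11}=1$, coupling the $\{e_1,e_2\}$ block to the rest of $C$ via cofactor expansion and breaking the clean ``$+2$'' separation.
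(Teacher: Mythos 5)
Your plan is correct and follows essentially the same route the paper takes: it is the rank formula of \autoref{prop:twist_polynomial_on_binary_delta_matroids} combined with a case analysis on $A\cap\{e_1,e_2\}$, which is precisely the specialization of \autoref{lemma:recursion_existence} and \autoref{prop:leaf_recursion}(a) to a positive leaf (and your even/odd split mirrors the $\tau_1,\tau_2$ partition in \autoref{prop:leaf_recursion_for_delta-matroids}). The one step you leave implicit---that when $e_1,e_2$ lie on the same side of the partition, pivoting on the $(e_1,e_2)$ entry (whose row is otherwise zero because $v_1$ is positive of degree one) splits off an independent rank-two block---is exactly the row/column reduction carried out in the proof of \autoref{lemma:recursion_existence}.
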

For our purposes, it will be more natural to work equivalently with looped simple graphs instead of signed graphs; the two are in bijective correspondence by associating a negative sign at a vertex $v$ with the presence of a loop (self-edge) at $v$, and a positive sign with the absence of a loop.

Not every graph is the intersection graph of a bouquet (in fact, an excluded-minor characterization of intersection graphs was obtained by Bouchet in \cite{Bouchet_1994}); nevertheless, one can use the theory of delta-matroids to define a polynomial on all looped simple graphs that coincides with the intersection polynomial when restricted to intersection graphs. Namely, any looped simple graph $G$ has an associated binary delta-matroid $D(G)$, and Yan and Jin introduced the \textit{twist polynomial} $\T{D}(z)$ of a delta-matroid $D$ in \cite{Yan_Jin_2022_twist}. We say that the twist polynomial of $G$ is $\T{G}(z)=\T{D(G)}(z)$, the twist polynomial of its delta-matroid. That the twist polynomial extends the intersection polynomial was shown in \cite{Yan_Jin_2022_twist}.

\begin{definition}
Let $G_1$ and $G_2$ be looped simple graphs with respective vertices $v_1$ and $v_2$ that are either both unlooped or both looped. Define the \emph{one-point join} $(G_1,v_1)\vee (G_2,v_2)$ (which we will often shorten to $G_1\vee G_2$) to be the disjoint union of $G_1$ and $G_2$ with the vertices $v_1$ and $v_2$ identified.    
\end{definition}
When $G_1$ and $G_2$ are both unlooped (i.e., $G_1$ and $G_2$ are both simple graphs), $G_1\vee G_2$ is also known as the \textit{1-clique-sum} of $G_1$ and $G_2$. Any graph containing a cut vertex (a vertex whose removal increases the number of connected components) can be decomposed as a one-point join of two smaller graphs joined at the cut vertex.

Given a looped simple graph $G$ with a vertex $v$, the \textit{loop complementation} of $G$ at $v$ is the graph $G+v$ obtained from $G$ by adding (resp. removing) a loop at $v$ if $v$ was originally unlooped (resp. looped) in $G$. The main result of this paper is the recursion formula below which expresses the twist polynomial of $G_1\vee G_2$ in terms of the pairwise products of the twist polynomials of $G_1$, $G_1+v_1$, and $G_1-v_1$ and the twist polynomials of $G_2$, $G_2+v_2$, and $G_2-v_2$.

\begin{theorem}\label{thm:looped_recursion}
        If $(G_1,v_1)\vee (G_2,v_2)$ is a one-point join of two looped simple graphs, then
        \[
        (4z^2-2z-2)\T{G_1\vee G_2}=\begin{bmatrix}
            \T{G_1} & \T{G_1+v_1} & \T{G_1-v_1}
        \end{bmatrix}
        \begin{bmatrix}
            -(z+1) & z & 2z^2 \\
            z & -(z+1) & 2z^2 \\
            2z^2 & 2z^2 & -4z^2(z+1)
        \end{bmatrix}
        \begin{bmatrix}
            \T{G_2} \\ \T{G_2+v_2} \\ \T{G_2-v_2}
        \end{bmatrix}.
        \]
\end{theorem}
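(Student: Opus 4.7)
The plan is to rewrite $\T{G_1 \vee G_2}(z)$ as a bilinear expression indexed by subsets of $E(G_1)$ and $E(G_2)$, and then reconcile it with the right-hand side via a change of basis at the join vertex.

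First, using the characterization (established elsewhere in the paper) of the feasible sets of $D(G_1 \vee G_2)$ in terms of those of $D(G_1)$ and $D(G_2)$, I would expand
\[
\T{G_1 \vee G_2}(z) = \sum_{A_1 \subseteq E(G_1)} \sum_{A_2 \subseteq E(G_2)} z^{w(A_1, A_2)},
\]
where $w(A_1, A_2)$ is the appropriate width exponent of $D(G_1 \vee G_2) * (A_1 \sqcup A_2)$. The key structural observation is that $w(A_1, A_2)$ depends on each $A_i$ only through the pair $(D(G_i) * A_i,\ \sigma_i)$, where $\sigma_i$ is a discrete ``status'' recording how the join vertex $v_i$ sits inside the feasible-set lattice of $D(G_i) * A_i$ (for instance, whether $v_i$ lies in some or all extremal feasible sets).

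Second, for each $i \in \{1,2\}$, I would introduce three partial generating functions $P_i^0(z), P_i^+(z), P_i^-(z)$ partitioning $\sum_{A_i} z^{w(D(G_i) * A_i)}$ according to the three values of $\sigma_i$. I would then show that $(P_i^0, P_i^+, P_i^-)$ and $(\T{G_i}, \T{G_i + v_i}, \T{G_i - v_i})$ are related by an explicit invertible $3 \times 3$ transition matrix, using the fact that loop complementation at $v_i$ permutes statuses according to known rules for binary delta-matroids, while vertex deletion corresponds to a projection onto the status-free part.

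Combining these, $\T{G_1 \vee G_2}(z)$ becomes a bilinear form $\sum_{a,b} c_{ab}(z)\, P_1^a(z)\, P_2^b(z)$ in the partial sums. Applying the two change-of-basis matrices on each side and clearing denominators should produce the matrix $M$ on the right together with the scalar $4z^2 - 2z - 2 = 2(2z+1)(z-1)$ on the left. The main obstacle will be pinpointing the three $v_i$-statuses and the coefficients $c_{ab}(z)$ precisely: the feasibility condition for $D(G_1 \vee G_2) * (A_1 \sqcup A_2)$ couples the two sides through the shared vertex in a manner depending jointly on widths and on the incidence of $v$, so a careful case analysis over loop status and feasibility at $v$ will be needed to force the specific entries $-(z+1)$, $z$, $2z^2$, and $-4z^2(z+1)$ of $M$ to appear with the right signs.
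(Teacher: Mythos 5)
Your outline has the right skeleton --- partition the subsets of the ground set by a three-valued ``status'' at the join vertex and convert the resulting partial sums into $\T{G_i}$, $\T{G_i+v_i}$, $\T{G_i-v_i}$ by an invertible linear transformation --- but the two steps you defer are exactly where the proof lives, and as stated neither is established. First, the status. You guess that it records ``whether $v_i$ lies in some or all extremal feasible sets,'' but that is not the invariant that makes the exponent split. The working tool is the adjacency matrix $M$ together with the rank formula $w(D*A)=\rank M[A]+\rank M[A^c]$ (\autoref{prop:twist_polynomial_on_binary_delta_matroids}); for $X=A\cap V_1$ containing the join vertex $v$, the correct trichotomy is whether the $v$-column of $M[X]$, with its $v$-entry deleted, lies in the column span of $M[X-v]$, and, if it does, the parity $\delta_X$ of the diagonal entries occurring in such an expression. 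Only with this choice do elementary row and column operations decouple $M[A]$ into a block coming from $G_1$ and a block equal to the adjacency matrix of $G_2+\delta_X v$ (or, in the inconsistent case, of $G_2-v$) restricted to $A\cap V_2$, which is what lets the inner sum over $A\cap V_2$ reassemble into a complete twist polynomial of a modification of $G_2$. Note also that the analysis need not be symmetric in the way your bilinear ansatz suggests: the status on the $G_1$-side alone dictates which of the three polynomials of $G_2$ appears, so you do not need (and would have to work considerably harder to justify) simultaneous statuses on both sides coupled by a full $3\times3$ array of coefficients $c_{ab}$.

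Second, the transition matrix. The claim that loop complementation ``permutes statuses according to known rules'' is not something you can cite; $\T{G_1+v_1}$ is itself a sum over all subsets, and relating it to your status-partitioned partial sums is a nontrivial assertion requiring proof. The paper's route avoids this: once the mere existence of an identity $\T{G_1\vee G_2}=Q_1\T{G_2}+Q_2\T{G_2+v_2}+Q_3\T{G_2-v_2}$ with $Q_i\in\Z[z]$ depending only on $(G_1,v_1)$ is secured (\autoref{lemma:recursion_existence}), one evaluates both sides at three explicit test graphs $G_2$ --- a bare vertex, and a pendant unlooped or looped neighbour, handled by \autoref{prop:leaf_recursion} --- and solves the resulting invertible $3\times3$ linear system for $Q_1,Q_2,Q_3$ (\autoref{lemma:recursion_uniqueness}). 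To complete your version you must either prove your change-of-basis identities directly (which amounts to redoing the matrix analysis) or adopt such an interpolation step. Two smaller corrections: the ground set of $D(G_i)$ is $V(G_i)$, not $E(G_i)$, and your double sum must range only over pairs $(A_1,A_2)$ that agree on whether they contain the identified vertex.
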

Moreover, we find that if $G_1-v_1$ or $G_2-v_2$ is unlooped, then \autoref{thm:looped_recursion} has a simpler form that involves only the twist polynomials of $G_1$, $G_1-v_1$, $G_2$, and $G_2-v_2$.

\begin{theorem}\label{thm:unlooped_recursion}
    If $(G_1,v_1)\vee (G_2,v_2)$ is a one-point join of two looped simple graphs such that either $G_1-v_1$ or $G_2-v_2$ is unlooped, then
    \[
    \T{G_1\vee G_2}=\frac{2z^2\T{G_1}\T{G_2-v_2}+2z^2\T{G_1-v_1}\T{G_2}-\T{G_1}\T{G_2}-4z^2\T{G_1-v_1}\T{G_2-v_2}}{2z^2-2}.
    \]
\end{theorem}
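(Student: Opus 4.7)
The plan is to deduce Theorem \ref{thm:unlooped_recursion} from Theorem \ref{thm:looped_recursion} by establishing an auxiliary linear identity that eliminates $\T{G_j+v_j}$ from the matrix formula. By the symmetry of the one-point join in $(G_1,v_1)\leftrightarrow(G_2,v_2)$, I may without loss of generality assume $G_2-v_2$ is unlooped, which forces every vertex of $G_2$ other than $v_2$ to carry no loop.

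The key step is to prove an auxiliary lemma of the form
\[
(z+1)\,\T{G_2+v_2} \;=\; z\,\T{G_2} + 2z^2\,\T{G_2-v_2},
\]
valid when $G_2-v_2$ is unlooped. I would establish this by analyzing the binary delta-matroid $D(G_2+v_2)$ directly: since the principal submatrix of the adjacency matrix indexed by $V(G_2)\setminus\{v_2\}$ has zero diagonal, the feasible sets of $D(G_2+v_2)$ decompose cleanly according to whether they contain $v_2$ and match with feasible sets of $D(G_2-v_2)$ via the fixed neighborhood of $v_2$. Tracking the twist exponent over each $A\subseteq V(G_2+v_2)$ grouped by $A\cap V(G_2-v_2)$ and the containment of $v_2$ yields the three-term identity.

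With the lemma in hand, I would expand the matrix product in Theorem \ref{thm:looped_recursion} and collect by $\T{G_1}$, $\T{G_1+v_1}$, and $\T{G_1-v_1}$. The coefficient of $\T{G_1+v_1}$ is exactly $z\T{G_2}-(z+1)\T{G_2+v_2}+2z^2\T{G_2-v_2}$, which vanishes by the lemma, so the $\T{G_1+v_1}$ contribution drops out entirely. Substituting the lemma into the remaining $\T{G_2+v_2}$ factors of the $\T{G_1}$ and $\T{G_1-v_1}$ coefficients and using $(4z^2-2z-2)/(2z^2-2)=(2z+1)/(z+1)$ to absorb the $(z+1)$ denominators, the four-term formula of Theorem \ref{thm:unlooped_recursion} emerges after routine simplification. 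The main obstacle is the auxiliary identity itself, whose proof requires careful bookkeeping of how the twist exponent shifts between the three delta-matroids as $A$ ranges over subsets; the unlooped hypothesis on $G_2-v_2$ is precisely what closes this bookkeeping into three terms rather than a longer expression.
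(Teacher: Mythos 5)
Your overall strategy---expand the quadratic form of \autoref{thm:looped_recursion}, kill the $\T{G_1+v_1}$ row with a linear relation among $\T{G_2}$, $\T{G_2+v_2}$, $\T{G_2-v_2}$, and then eliminate $\T{G_2+v_2}$ from what remains---is essentially the same algebra the paper performs, just phrased through the matrix of \autoref{thm:looped_recursion} rather than through the coefficients $Q_1,Q_2,Q_3$ of \autoref{lemma:recursion_uniqueness}. The genuine gap is your auxiliary lemma. The identity $(z+1)\T{G_2+v_2}=z\T{G_2}+2z^2\T{G_2-v_2}$ is \emph{false} under the hypothesis that only $G_2-v_2$ is unlooped: it also requires $v_2$ itself to be unlooped. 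Take $G_2$ to be a single looped vertex $v_2$; then $\T{G_2}=2z$, $\T{G_2+v_2}=2$, $\T{G_2-v_2}=1$, and the claimed identity reads $2z+2=4z^2$. A non-degenerate counterexample: for $G_2$ a looped vertex $v_2$ adjacent to one unlooped vertex, $\T{G_2}=2z^2+2z$, $\T{G_2+v_2}=2z^2+2$, $\T{G_2-v_2}=2$, and $(z+1)(2z^2+2)=2z^3+2z^2+2z+2\neq 2z^3+6z^2$. When $v_2$ is looped the correct relation interchanges the roles of $G_2$ and $G_2+v_2$, i.e.\ $(z+1)\T{G_2}=z\T{G_2+v_2}+2z^2\T{G_2-v_2}$, and with that version the coefficient of $\T{G_1+v_1}$ in your expansion no longer vanishes, so the elimination collapses. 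The statement you want is exactly \autoref{cor:loop_complementation_of_unlooped_graph}, whose hypothesis is that \emph{all} of $G$ is unlooped; your weakening to ``$G_2-v_2$ unlooped'' is where the argument breaks, and your sketch of the lemma's proof (matching feasible sets via the fixed neighborhood of $v_2$) is not detailed enough to detect the diagonal entry at $v_2$, which is precisely what flips the identity.

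As written, then, your argument establishes the formula only when the identified vertex is unlooped. For what it is worth, the looped join vertex is genuinely delicate here, and not only for your proof: the paper's own route goes through \autoref{lemma:recursion_existence} (where $G_1-v_1$ unlooped forces $\delta_X=0$, hence $Q_2=0$) and the looped case of \autoref{lemma:recursion_uniqueness}, and a direct check of the stated four-term formula on $G_1=G_2=$ (a looped vertex adjacent to one unlooped vertex), for which one computes $\T{G_1\vee G_2}=6z^2+2z$ while the right-hand side of \autoref{thm:unlooped_recursion} evaluates to $2z^2(3z+5)/(z+1)$, indicates that this case needs more care than either argument currently gives it. If you add the hypothesis that $v_1$ and $v_2$ are unlooped, your auxiliary lemma becomes \autoref{cor:loop_complementation_of_unlooped_graph}, the $\T{G_1+v_1}$ row cancels as you say, and your simplification (using $4z^2-2z-2=2(2z+1)(z-1)$ to clear the $(z+1)$ denominators) does reproduce the four-term formula correctly.
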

In particular, taking $G_1$ to be the graph consisting of a single unlooped vertex connected to $v_1$ in \autoref{thm:unlooped_recursion} gives a generalization of \autoref{prop:Yan_Jin_recurrence} to looped simple graphs.

Note that the contents of \autoref{thm:looped_recursion} and \autoref{thm:unlooped_recursion} can be applied to the special case of ribbon graphs by restricting to intersection graphs; in this setting, the one-point join can be interpreted as a suitable join on bouquets, and loop complementation corresponds to the partial Petrial. Translating instead to the language of (normal, binary) delta-matroids, the one-point join operation is described for set systems in Section \ref{sec:The Delta-Matroid Associated to a One-Point Join of Graphs}, and loop complementation was generalized to set systems in \cite{Brijder_Hoogeboom_2011}.

This paper is organized as follows. In Section \ref{sec:Background}, we review some definitions and prior results pertaining to ribbon graphs and delta-matroids. We show in Section \ref{sec:A Rank Formula for the Twist Polynomial on Looped Simple Graphs} that the twist polynomial on looped simple graphs can be computed using ranks of adjacency matrices. In Section \ref{sec:The Twist Polynomial of a One-Point Join of Graphs}, we prove \autoref{thm:looped_recursion} and \autoref{thm:unlooped_recursion}, along with two results on the twist polynomial of looped simple graphs evaluated at $-1/2$, and the twist polynomial of a graph with a single looped vertex. We also use \autoref{thm:unlooped_recursion} to compute the twist polynomial of the windmill graph. In Section \ref{sec:The Delta-Matroid Associated to a One-Point Join of Graphs}, we give a description of the delta-matroid $D(G_1\vee G_2)$ purely in terms of the delta-matroid structures of $D(G_1)$ and $D(G_2)$ by deriving a determinant identity for an ``almost block-diagonal'' matrix. Lastly, in Section \ref{sec:Leaf Recursion for Delta-Matroids}, we show that the leaf recursion formula in \autoref{prop:Yan_Jin_recurrence} holds for the twist polynomial on delta-matroids.

\section{Background}\label{sec:Background}
\subsection{Ribbon Graphs}
We briefly recall some basic notions from the theory of ribbon graphs; for a more detailed introduction, see \cite{Ellis-Monaghan_Moffatt_2013}. A \textit{ribbon graph} $\Gamma$ is a (possibly non-orientable) surface with boundary expressed as a union of vertex and edge discs such that
\begin{itemize}
    \item Vertices and edges intersect at disjoint line segments.
    \item Each such line segment lies on the boundary of exactly one vertex and edge.
    \item Each edge contains exactly two line segments.
\end{itemize}
We let $V(\Gamma)$ and $E(\Gamma)$ denote the sets of vertices and edges of $\Gamma$ respectively. For $A\subseteq E(\Gamma)$, Chmutov \cite{Chmutov_2009} defined the \textit{partial dual} $\Gamma^A$ of $\Gamma$ with respect to $A$ as the ribbon graph obtained from $\Gamma$ by sewing a disc into each boundary component of the spanning ribbon subgraph induced by $A$ (these new discs then become the vertices of $\Gamma^A$) and deleting the interiors of all vertices of $G$. In particular, $\Gamma^{E(\Gamma)}$ is the geometric dual (also called Euler-Poincar\'{e} dual) of $G$.

Given $A\subseteq E(\Gamma)$, the \textit{partial Petrial} is the ribbon graph $\Gamma^{\tau(A)}$ obtained from $\Gamma$ by adding a half-twist to each edge $e\in A$ (in discussions of the partial Petrial, the partial dual is also denoted as $\Gamma^{\delta(A)}$).

A \textit{spanning quasi-tree} of a connected ribbon graph $\Gamma$ is a spanning ribbon subgraph of $\Gamma$ with exactly one boundary component. If $\Gamma$ is disconnected, a \textit{spanning quasi-tree} of $\Gamma$ is a spanning ribbon subgraph $Q$ such that $Q$ has the same number of connected components as $\Gamma$, and the connected components of $Q$ are spanning quasi-trees of the connected components of $\Gamma$.

A \textit{bouquet} $B$ is a ribbon graph with exactly one vertex $v$. An edge $e\in E(B)$ is \textit{twisted} (or \textit{nonorientable}) if $e\cup v$ is homeomorphic to a M\"{o}bius band, and \textit{untwisted} (or \textit{orientable}) otherwise. A \textit{signed rotation system} \cite{Gross_Tucker_1987} of $B$ is obtained by choosing an orientation on the single vertex $v$ of $B$, which induces a cyclic ordering of the half-edges incident to $v$, and assigning a positive (resp. negative) sign to each untwisted (resp. twisted) edge of $B$. The \textit{signed intersection graph} of $B$ is the graph $I(B)$ whose vertices correspond to the edges of $B$, with two vertices of $I(B)$ being adjacent if and only if the half-edges of the two corresponding ribbon edges appear in alternating order in the signed rotation system of $B$; each vertex of $I(B)$ is given the corresponding sign in the signed rotation system. A signed intersection graph is called \textit{positive} if all of its vertices are assigned a positive sign. For the remainder of this paper, we will work exclusively with looped simple graphs in place of signed graphs.

\subsection{Delta-Matroids}
A \textit{set system} is a pair $S=(E,\mathcal{F})$ of a set $E$ called the \textit{ground set} and a collection $\mathcal{F}$ of subsets of $E$ called the \textit{feasible sets}. We will also use the notation $E(S)$ and $\mathcal{F}(S)$ to refer to the ground set and collection of feasible sets of $S$ respectively. We say that $S$ is a \textit{proper set system} if $\mathcal{F}$ is nonempty. A \textit{delta-matroid} is a proper set system $D=(E,\mathcal{F})$ that satisfies the Symmetric Exchange Axiom:
for all $X,Y\in\mathcal{F}$ and all $u\in X \triangle Y$, there exists $v\in X\triangle Y$ such that $X\triangle \{u,v\}\in\mathcal{F}$. Note that the possibility $u=v$ is allowed. 

A \textit{matroid} is a delta-matroid $M=(E,\mathcal{B})$ such that all elements of $\mathcal{B}$ are equicardinal. The \textit{rank} of a subset $A\subseteq E$ is
\[
r_M(A)=\max\{|A\cap B|:B\in\mathcal{B}\},
\]
and the \textit{nullity} of $A$ is $n_M(A)=|A|-r_M(A)$.

Given a delta-matroid $D=(E,\mathcal{F})$, let $\mathcal{F}_{\min}$ and $\mathcal{F}_{\max}$ denote the collections of feasible sets of $D$ with minimum and maximum cardinality respectively. Bouchet \cite{Bouchet_1989} defined the matroids $D_{\min}=(E,\mathcal{F}_{\min})$ and $D_{\max}=(E,\mathcal{F}_{\max})$, called the \textit{lower matroid} and \textit{upper matroid} of $D$ respectively. In \cite{Bouchet_1987_representability}, Bouchet defined the rank function on delta-matroids: for $A\subseteq E$, the rank of $A$ is
\[
\rho_D(A)=|E|-\min\{|A\triangle F|:F\in\mathcal{F}\}.
\]

An element $e\in E$ of a delta-matroid $D=(E,\mathcal{F})$ is called a \textit{coloop} if $e$ is contained in every feasible set of $D$, and $e$ is called a \textit{loop} if no feasible set of $D$ contains $e$. Bouchet and Duchamp \cite{Bouchet_1991_binary} defined the \textit{deletion} of $D$ by $e$ to be the delta-matroid
\[
D\setminus e=(E\setminus e,\{F:F\in\mathcal{F}\text{ and }F\subseteq E\setminus e\})
\]
if $e$ is not a coloop, and the \textit{contraction} of $D$ by $e$ to be 
\[
D/ e=(E\setminus e,\{F\setminus e:F\in\mathcal{F}\text{ and }e\in F\})
\]
if $e$ is not a loop. If $e$ is a coloop, then $D\setminus e$ is defined to be $D/e$, and if $e$ is a loop, then $D/e$ is defined to be $D\setminus e$. Any sequence of deletions and contractions applied to $D$ results in a delta-matroid independent of the order in which the deletions and contractions were done \cite{Bouchet_1991_binary}. For any $A\subseteq E$, the \textit{restriction} of $D$ to $A$ is $D|_A=D\setminus A^c$.

Given two delta-matroids $D_1=(E_1,\mathcal{F}_1)$ and $D_2=(E_2,\mathcal{F}_2)$ with $E_1\cap E_2=\emptyset$, the \textit{direct sum} of $D_1$ and $D_2$ is the delta-matroid 
\[
D_1\oplus D_2=(E_1\sqcup E_2,\{F_1\sqcup F_2:F_1\in\mathcal{F}_1,F_2\in\mathcal{F}_2\}).
\]
The \textit{width} of a delta-matroid $D=(E,\mathcal{F})$ is 
\[
w(D)=\max\{|F| : F\in\mathcal{F}\} - \min\{|F| : F\in\mathcal{F}\}.
\] 
For any $A\subseteq E$, Bouchet \cite{Bouchet_1987_symmetric_matroids} defined the \textit{twist} of $D$ by $A$ to be the delta-matroid
        \[D*A=(E,\{X\triangle A : X\in\mathcal{F}\}).\]
We say $D$ is a \textit{normal} delta-matroid if $\emptyset\in\mathcal{F}$. There is a convenient formula for the width of any twist of a normal delta-matroid. 
\begin{lemma}\label{lemma:width_of_twist_of_normal_delta_matroid}
    (Lemma 17 in \cite{Yan_Jin_2022_twist}). Let $D=(E,\mathcal{F})$ be a normal delta-matroid. Then for all $A\subseteq E$,
    \[
    w(D*A)=w(D|_A)+w(D|_{A^c}).
    \]
\end{lemma}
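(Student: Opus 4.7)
The plan is to rewrite the width of $D*A$ as a max minus min of a single simpler functional on $\mathcal{F}$, and then use normality together with the symmetric exchange axiom to evaluate the max and min. Using $|X \triangle A| = |A| + |X \cap A^c| - |X \cap A|$, I would set $f(X) := |X \cap A^c| - |X \cap A|$, giving
\[
w(D*A) = \max_{X \in \mathcal{F}} f(X) - \min_{X \in \mathcal{F}} f(X).
\]
It is then enough to prove $\max_{X \in \mathcal{F}} f(X) = w(D|_{A^c})$ and $\min_{X \in \mathcal{F}} f(X) = -w(D|_A)$, and by the symmetry exchanging $A$ with $A^c$ just one of these equations needs a separate argument.

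The lower bound $\max f \geq w(D|_{A^c})$ is immediate: normality makes $\emptyset$ feasible in both $D|_A$ and $D|_{A^c}$, so $w(D|_{A^c})$ equals the size of the largest feasible set of $D$ contained in $A^c$. Choosing such an $F_{A^c} \in \mathcal{F}$ with $F_{A^c} \subseteq A^c$ gives $f(F_{A^c}) = |F_{A^c}| = w(D|_{A^c})$. The analogous witness $F_A \in \mathcal{F}$ with $F_A \subseteq A$ gives $f(F_A) = -w(D|_A)$, so combining yields $w(D*A) \geq w(D|_A) + w(D|_{A^c})$.

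The matching upper bound $f(X) \leq w(D|_{A^c})$ for all $X \in \mathcal{F}$ I would prove by induction on $|X|$. The base case $X = \emptyset$ is trivial, and if $X \subseteq A^c$ then $f(X) = |X| \leq w(D|_{A^c})$ directly. Otherwise, pick $u \in X \cap A$ and apply the symmetric exchange axiom to $X$ and $Y = \emptyset \in \mathcal{F}$ to produce $v \in X$ (possibly $v = u$) with $X \setminus \{u, v\} \in \mathcal{F}$. The three cases $v = u$, $v \in X \cap A \setminus \{u\}$, and $v \in X \cap A^c$ give $f(X \setminus \{u, v\})$ equal to $f(X) + 1$, $f(X) + 2$, and $f(X)$, respectively, so applying the inductive hypothesis to the strictly smaller feasible set $X \setminus \{u, v\}$ yields $f(X) \leq w(D|_{A^c})$ in each case. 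The main thing to watch is that the induction still closes in the third case, where $f$ does not strictly increase; this is fine because $|X|$ strictly decreases by $2$, so there is no circularity. No delta-matroid axiom beyond normality and a single invocation of symmetric exchange with $Y = \emptyset$ is needed.
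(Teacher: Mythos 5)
Your proof is correct. Note that the paper does not actually prove this lemma—it is quoted as Lemma 17 of \cite{Yan_Jin_2022_twist}—so there is no in-text argument to compare against, and your write-up stands as a valid self-contained proof. The reduction of $w(D*A)$ to $\max f-\min f$ with $f(X)=|X\cap A^c|-|X\cap A|$ is sound; the witnesses for the easy inequality exist precisely because normality makes $\emptyset$ feasible in both restrictions (and also guarantees there are no coloops, so $D|_{A}$ really is the set system $\{F\in\mathcal F: F\subseteq A\}$); and the induction for the upper bound closes, since in every branch of the symmetric-exchange case analysis the new feasible set is strictly smaller while $f$ does not decrease, so strong induction on $|X|$ applies. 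Your inductive step is, in effect, a proof of the nontrivial half of the identity $w(D|_{A^c})=|A^c|-\min\{|A^c\triangle F|:F\in\mathcal F\}$, i.e.\ the ``in particular'' clause of \autoref{lemma:wdith_of_restriction}. If one takes that identity as given, the lemma follows in two lines from the pointwise fact $|F\triangle A|+|F\triangle A^c|=|E|$: it yields $\max_{F}|F\triangle A|=|E|-\min_{F}|F\triangle A^c|$, hence $w(D*A)=|E|-\min_{F}|F\triangle A|-\min_{F}|F\triangle A^c|=w(D|_A)+w(D|_{A^c})$. Your argument trades that imported lemma for a single clean use of symmetric exchange against $Y=\emptyset$, which is a fair exchange and arguably more instructive.
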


\begin{lemma}\label{lemma:wdith_of_restriction}
    (Proposition 5.39 in \cite{Chun_Moffatt_Noble_Rueckriemen_2019}). Let $D=(E,\mathcal{F})$ be a delta-matroid. Then for all $A\subseteq E$,
    \[
    w(D|_A)=\rho_D(A)-r_{D_{\min}}(A)-n_{D_{\min}}(E)+n_{D_{\min}}(A).
    \]
    In particular, if $D$ is normal, then
    \[
    w(D|_A)=|A|-\min\{|A\triangle F|:F\in\mathcal{F}\}.
    \]
\end{lemma}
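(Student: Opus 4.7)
The plan is to prove the ``in particular'' normal identity directly, since it is the form needed for the paper's applications to looped simple graphs, and then to sketch how the general rank-theoretic formula follows by a twist reduction to the normal case.

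For the normal case, observe that $\emptyset \in \mathcal{F}$ precludes the existence of coloops in $D$: any coloop would have to belong to every feasible set, contradicting the feasibility of $\emptyset$. The non-coloop deletion rule applied iteratively across $A^c$ therefore yields
\[
\mathcal{F}(D|_A) = \{F \in \mathcal{F}: F \subseteq A\}.
\]
Since $\emptyset$ lies in this collection, $w(D|_A) = \max\{|F|: F \in \mathcal{F}, F \subseteq A\}$. For $F \subseteq A$ one has $|A \triangle F| = |A| - |F|$, so the identity is equivalent to the assertion that $\min\{|A \triangle F|: F \in \mathcal{F}\}$ is attained at some feasible set contained in $A$.

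To prove this attainment, pick $F^* \in \mathcal{F}$ minimizing $|A \triangle F^*|$ and, subject to that, minimizing $|F^* \cap A^c|$. If some $u \in F^* \cap A^c$ existed, then applying the Symmetric Exchange Axiom to $(F^*, \emptyset)$ at $u \in F^* \triangle \emptyset$ would produce $v \in F^*$ with $F^* \triangle \{u, v\} \in \mathcal{F}$. A case split on $v = u$, $v \in A^c \setminus \{u\}$, or $v \in A \cap F^*$ then yields a feasible set that either strictly decreases $|A \triangle F|$ (the first two cases, by one or two respectively) or leaves it unchanged while strictly decreasing $|F \cap A^c|$ (the third case), contradicting the choice of $F^*$. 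Hence $F^* \subseteq A$, and the normal identity follows.

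For the general formula, fix $B_0 \in \mathcal{F}_{\min}$ chosen to maximize $|B_0 \cap A|$, so that $|B_0 \cap A| = r_{D_{\min}}(A)$ and $|B_0| = r_{D_{\min}}(E)$. The delta-matroid $D' = D * B_0$ is normal (since $B_0 \triangle B_0 = \emptyset \in \mathcal{F}(D')$) and satisfies $\rho_{D'}(A) = \rho_D(A \triangle B_0)$ by the definition of $\rho_D$, so the normal case gives $w(D'|_A) = |A| - |E| + \rho_D(A \triangle B_0)$. The remaining task is to compare $w(D|_A)$ with $w(D'|_A)$ by applying the interchange $(D * \{e\}) \setminus e = D / e$ element-by-element along $B_0$, producing a correction term of exactly $r_{D_{\min}}(E) - 2r_{D_{\min}}(A)$ that accounts for the failure of restriction and twist to commute; after the cardinality arithmetic this matches the claimed formula. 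This comparison is the main obstacle, and the full bookkeeping is the content of Proposition 5.39 of \cite{Chun_Moffatt_Noble_Rueckriemen_2019}, which the paper invokes directly in lieu of reproducing it.
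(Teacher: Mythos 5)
The paper offers no proof of this lemma at all; it is imported verbatim as Proposition 5.39 of \cite{Chun_Moffatt_Noble_Rueckriemen_2019}, so your attempt is being compared against a bare citation. Your treatment of the normal case is correct and complete: $\emptyset\in\mathcal{F}$ rules out coloops at every stage of the deletion, so $\mathcal{F}(D|_A)=\{F\in\mathcal{F}:F\subseteq A\}$ and $w(D|_A)=\max\{|F|:F\in\mathcal{F},\,F\subseteq A\}$; and your exchange argument --- taking $F^*$ to minimize $|A\triangle F^*|$ and, secondarily, $|F^*\cap A^c|$, then splitting on $v=u$, $v\in (F^*\cap A^c)\setminus\{u\}$, and $v\in F^*\cap A$ --- is exhaustive, with each case contradicting one of the two minimality conditions. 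Since the only place the paper explicitly invokes this lemma is \autoref{prop:leaf_recursion_for_delta-matroids}, where $D$ is normal, your argument fully covers what the paper actually needs and is strictly more informative than the citation.

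As a proof of the statement as written, however, the general rank-theoretic formula is not established. The reduction $D'=D*B_0$ with $B_0\in\mathcal{F}_{\min}$ maximizing $|B_0\cap A|$ is a sensible start, and $\rho_{D'}(A)=\rho_D(A\triangle B_0)$ is correct, but the target involves $\rho_D(A)$ rather than $\rho_D(A\triangle B_0)$, and the asserted ``correction term of exactly $r_{D_{\min}}(E)-2r_{D_{\min}}(A)$'' relating $w(D|_A)$ to $w(D'|_A)$ is precisely the nontrivial content of the general statement: you assert it rather than derive it, and then defer to the very proposition being proved. (Your sketch also ignores that in the non-normal case some elements of $A^c$ may be coloops, so the deletions defining $D|_A$ may silently become contractions.) Either present the normal case as your contribution and explicitly rest the general case on the citation, or carry out the comparison of $w(D|_A)$ with $w((D*B_0)|_A)$ in full; as written, the second half of the lemma is unproven.
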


\subsection{Ribbon-Graphic and Binary Delta-Matroids}

One can associate a delta-matroid to any ribbon graph, whereby many properties of ribbon graphs correspond to analogous properties of delta-matroids.

\begin{prop}\label{prop:ribbon_graphic_delta_matroid}
    (Chun, Moffatt, Noble, and Rueckriemen \cite{Chun_Moffatt_Noble_Rueckriemen_2019}). Let $\Gamma$ be a ribbon graph, and set
    \[
    \mathcal{F}=\{F\subseteq E(\Gamma):F\text{ is the edge set of a spanning quasi-tree of }\Gamma\}.
    \]
    Then the set system $D(\Gamma)=(E(\Gamma),\mathcal{F})$ is a delta-matroid, and we call $D(\Gamma)$ a ribbon-graphic delta-matroid. Moreover, $\varepsilon(\Gamma)=w(D(\Gamma))$ and for every $A\subseteq E(\Gamma)$, we have that $D(\Gamma^A)=D(\Gamma)*A$.
\end{prop}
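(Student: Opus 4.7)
The plan is to prove the three claims---that $D(\Gamma)$ satisfies the Symmetric Exchange Axiom, that $\varepsilon(\Gamma)=w(D(\Gamma))$, and that $D(\Gamma^A)=D(\Gamma)*A$---by treating the partial-dual/twist correspondence as the central ingredient, since both the delta-matroid axiom and the genus-width identity can be reduced to bouquets once that correspondence is in hand.

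First I would establish the identity $D(\Gamma^A)=D(\Gamma)*A$. It suffices to show that $F\subseteq E(\Gamma)$ is the edge set of a spanning quasi-tree of $\Gamma$ if and only if $F\triangle A$ is the edge set of a spanning quasi-tree of $\Gamma^A$. This is a boundary-component accounting argument: passing from $\Gamma$ to $\Gamma^A$ replaces the vertex discs inside the spanning subgraph on $A$ by discs sewn into each of its boundary components, and a careful analysis shows that the boundary components of the spanning ribbon subgraph of $\Gamma^A$ on $F\triangle A$ are in natural bijection with those of the spanning ribbon subgraph of $\Gamma$ on $F$. Since the quasi-tree condition is precisely that the number of boundary components equals $c(\Gamma)$, the bijection gives the equivalence.

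Next I would derive the width identity. For any spanning ribbon subgraph $H$ of $\Gamma$ with $v$ vertices and $f(H)$ boundary components, the Euler characteristic formula yields
\[
\varepsilon(H)=2c(H)-v+|E(H)|-f(H).
\]
A spanning quasi-tree $Q$ satisfies $c(Q)=c(\Gamma)=f(Q)$, so $|E(Q)|=\varepsilon(Q)+v-c(\Gamma)$, making the size of $Q$ an increasing function of its genus. The minimum size ($\varepsilon(Q)=0$) is attained by any ordinary spanning forest of $\Gamma$, and using the partial-dual identity just established one reduces the maximum-size case to that of a bouquet, for which the entire edge set is itself a spanning quasi-tree of genus $\varepsilon(\Gamma)$. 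Subtracting gives $w(D(\Gamma))=\varepsilon(\Gamma)$.

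Finally, to verify the Symmetric Exchange Axiom I would invoke the fact that every connected ribbon graph admits a bouquet partial dual. Combined with $D(\Gamma^A)=D(\Gamma)*A$ and the easy fact that twists of delta-matroids are again delta-matroids, this reduces the axiom to the case when $\Gamma$ is a bouquet $B$; there the spanning quasi-trees of $B$ correspond to subsets of chords in the signed rotation system whose chord-diagram-plus-vertex has one boundary component, and the axiom can be checked by an explicit interlacement calculation on chord diagrams. The main obstacle will be the boundary-component bookkeeping in the partial-dual identity: one must verify that sewing in discs on $A$ matches the symmetric-difference operation at the level of quasi-tree structure. Once this topological correspondence is secured, the width identity (via Euler characteristic) and the exchange axiom (via reduction to bouquets) both follow without further difficulty.
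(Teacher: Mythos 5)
The paper itself gives no proof of this proposition---it is quoted from Chun, Moffatt, Noble, and Rueckriemen---so your attempt has to stand on its own. Your overall architecture is the standard one: establish $D(\Gamma^A)=D(\Gamma)*A$ first via the boundary-component correspondence $\partial\bigl((\Gamma^A)|_{F\triangle A}\bigr)=\partial\bigl(\Gamma|_F\bigr)$, then use it to deduce the other two claims. Your treatment of minimal quasi-trees is also correct: the Euler-characteristic identity $\varepsilon(H)=2c(H)-v+|E(H)|-f(H)$ does give $|E(Q)|=\varepsilon(Q)+v-c(\Gamma)$ for a spanning quasi-tree $Q$, and the minimum $v-c(\Gamma)$ is attained exactly by the spanning forests.

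The genuine gap is in the maximal quasi-tree step. It is false that for a bouquet ``the entire edge set is itself a spanning quasi-tree'': a bouquet consisting of a single untwisted loop is an annulus with two boundary components, so its full edge set is not a quasi-tree (its unique maximal quasi-tree is $\emptyset$); in general $E(B)$ is a quasi-tree of a bouquet $B$ only when $B$ has exactly one boundary component, i.e.\ when $\varepsilon(B)=|E(B)|$. The proposed reduction to bouquets is also not justified here, because width is not a twist invariant---$w(D*A)\neq w(D)$ in general is the whole point of the twist polynomial---so knowing $\varepsilon(B)=w(D(B))$ for the bouquet $B=\Gamma^{Q}$ does not transfer back to $\Gamma$ without already knowing $\varepsilon(B^A)=w(D(B)*A)$ for every $A$. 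The standard repair uses your partial-dual identity with $A=E(\Gamma)$: the quasi-trees of the geometric dual $\Gamma^{E}$ are exactly the complements of the quasi-trees of $\Gamma$, so maximal quasi-trees of $\Gamma$ are complements of spanning forests of $\Gamma^{E}$ and have $|E(\Gamma)|-f(\Gamma)+c(\Gamma)$ edges; subtracting $v-c(\Gamma)$ gives $w(D(\Gamma))=2c(\Gamma)-v+|E(\Gamma)|-f(\Gamma)=\varepsilon(\Gamma)$. Separately, for the Symmetric Exchange Axiom you leave the bouquet case as ``an explicit interlacement calculation,'' which is where essentially all of the content lies; the efficient route is \autoref{lemma:corank} applied to each spanning sub-bouquet, which identifies $D(B)$ with the binary delta-matroid $D(M)$ of the interlacement matrix and lets you invoke Bouchet's theorem that $D(M)$ is a delta-matroid for any symmetric or skew-symmetric $M$.
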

In addition, if $B$ is a bouquet, then $D(B)$ is normal.

Let $\F$ be a field and consider a skew-symmetric matrix $M$ over $\F$ with rows and columns indexed by a finite set $E$. For $S\subseteq E$, let $M[S]$ denote the principal submatrix of $M$ obtained by deleting all rows and columns of $M$ indexed by elements of $E\setminus S$. Let
\[
\mathcal{F}=\{S\subseteq E:M[S]\text{ is nonsingular}\}.
\]
By convention, $M[\emptyset]$ is considered to be nonsingular. Bouchet \cite{Bouchet_1987_representability} showed that $D(M)=(E,\mathcal{F})$ is a delta-matroid. Any delta-matroid that has a twist isomorphic to $D(M)$ for some skew-symmetric matrix $M$ over $\F$ is called \textit{representable over} $\F$. 

\begin{lemma}\label{lemma:Bouchet_repr}
    (Bouchet \cite{Bouchet_1987_representability}). Let $\F$ be a field and suppose $D$ is a delta-matroid representable over $\F$. Then for all feasible sets $F$ of $D$, there exists a skew-symmetric matrix $M$ over $\F$ such that $D*F=D(M)$. 
\end{lemma}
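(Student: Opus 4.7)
The plan is to reduce the claim to a concrete matrix construction via principal pivoting. Since $D$ is representable over $\F$, by definition there exist a skew-symmetric matrix $M_0$ over $\F$ and a subset $A\subseteq E$ such that $D*A=D(M_0)$, i.e., $D=D(M_0)*A$. Let $F$ be any feasible set of $D$. Then $F\triangle A$ is a feasible set of $D(M_0)$, and using the fact that twisting is a group action (i.e., $(N*X)*Y=N*(X\triangle Y)$), we have
\[
D*F=(D(M_0)*A)*F=D(M_0)*(A\triangle F).
\]
So it suffices to prove the following reduced statement: if $M$ is a skew-symmetric matrix over $\F$ on ground set $E$ and $S\subseteq E$ satisfies $M[S]$ nonsingular, then there exists a skew-symmetric matrix $M'$ over $\F$ with $D(M)*S=D(M')$.

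For the reduced statement, I would invoke the principal pivot construction. After reordering rows and columns so that the indices in $S$ appear first, write
\[
M=\begin{pmatrix} P & Q \\ -Q^T & R\end{pmatrix},
\]
where $P=M[S]$ is invertible. Define $M'$ by pivoting on $P$: roughly,
\[
M'=\begin{pmatrix} -P^{-1} & P^{-1}Q \\ Q^{T}P^{-1} & R+Q^{T}P^{-1}Q\end{pmatrix}
\]
(with the precise signs chosen so as to preserve skew-symmetry). The first routine step is to verify that $M'$ is skew-symmetric over $\F$, using that $P^{T}=-P$ implies $(P^{-1})^{T}=-P^{-1}$ and that $Q^{T}P^{-1}Q$ is skew-symmetric for the same reason.

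The crux is the determinant identity: for every $T\subseteq E$,
\[
\det(M'[T])=\pm\frac{\det(M[S\triangle T])}{\det(M[S])}.
\]
This is a consequence of the Schur complement formula, applied after splitting $T$ into $T\cap S$ and $T\setminus S$ and writing both $M'[T]$ and $M[S\triangle T]$ in the corresponding block form; the identity essentially says that a principal pivot on $S$ exchanges the roles of indices inside and outside $S$ in the determinantal criterion. Granting the identity, one reads off that $M'[T]$ is nonsingular if and only if $M[S\triangle T]$ is nonsingular, which is exactly the condition $T\in\mathcal{F}(D(M)*S)$. Hence $D(M')=D(M)*S$, and setting this equal to $D*F$ via the reduction above furnishes the required matrix.

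The main obstacle is the determinant identity, which is the technical heart of the argument; the reduction via twists and the verification of skew-symmetry are formal, but the Schur complement computation requires careful bookkeeping of the block partition of $M$ relative to both $S$ and $T$, along with the sign arising from the row/column permutation that brings $S\triangle T$ into a canonical ordering. Once the identity is established, however, the conclusion is immediate, and the pivoted matrix $M'$ is the explicit skew-symmetric representation of $D*F$ promised by the lemma.
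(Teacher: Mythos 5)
The paper offers no proof of this lemma---it is quoted directly from Bouchet's representability paper---and your argument is precisely the standard one underlying that citation: reduce via the group-action identity $(D*A)*F=D*(A\triangle F)$ to twisting $D(M_0)$ by a set $S=A\triangle F$ with $M_0[S]$ nonsingular, then apply the principal pivot transform, whose determinant identity $\det\bigl((M*S)[T]\bigr)=\pm\det\bigl(M[S\triangle T]\bigr)/\det\bigl(M[S]\bigr)$ (Tucker's identity) is exactly the technical heart you isolate. The reduction, the skew-symmetry verification, and the passage from the identity to $D(M')=D(M)*S$ are all correct, so the proposal is sound and matches the cited argument.
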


We will mainly be interested in the case when $\F=\F_2$ is the field with two elements, in which case delta-matroids representable over $\F_2$ are called \textit{binary}. Over $\F_2$, the notions of symmetric and skew-symmetric matrices are identical, and such a matrix can be viewed as the adjacency matrix of a looped simple graph. A \textit{looped simple graph} $G$ is a graph that is allowed to have edges connecting a vertex to itself (called loops) but cannot have more than one edge between any given pair of vertices. We use $V(G)$ and $E(G)$ to denote the set of vertices and edges of $G$ respectively. The adjacency matrix $M$ of $G$ is defined as follows: for all distinct $u,v\in V(G)$, $M_{uv}=1$ if $\{u,v\}\in E(G)$ and $M_{uv}=0$ otherwise; $M_{vv}=1$ if $v$ has a loop and $M_{vv}=0$ otherwise. We define the delta-matroid associated to $G$ to be $D(G)=D(M)$, the binary delta-matroid obtained from the adjacency matrix of $G$.

Note that if $B$ is a bouquet then $D(B)=D(I(B))$.\footnote{Combined with \autoref{prop:ribbon_graphic_delta_matroid}, this gives an alternative proof of \autoref{prop:Yan_Jin_signed_intersection_graph}, which was proved in \cite{Yan_Jin_2022} using methods purely from topological graph theory.} This can be seen from the fact below.

\begin{lemma}\label{lemma:corank}
    (Mellor \cite{Mellor}). Let $B$ be a bouquet and let $M$ be the adjacency matrix of $I(B)$. Then the number of boundary components of $B$ is equal to $\corank M+1$.
\end{lemma}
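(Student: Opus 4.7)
The plan is to reduce the claim to three identities: (i) $b = |E(B)| + 1 - \varepsilon(B)$, coming from the Euler characteristic; (ii) $\varepsilon(B) = w(D(B))$, which is \autoref{prop:ribbon_graphic_delta_matroid}; and (iii) $w(D(B)) = \rank M$, which is a linear-algebra fact about symmetric matrices over $\F_2$. Chaining these gives $\rank M = |E(B)| + 1 - b$, equivalently $b = \corank M + 1$.

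For identity (i), I would view $B$ as a compact surface with boundary and cap off its $b$ boundary circles with discs to produce a closed surface $\hat{B}$. A CW-structure on $\hat{B}$ with one $0$-cell (the vertex), $|E(B)|$ $1$-cells (one for each edge core of $B$), and $b$ $2$-cells (the capping discs) gives $\chi(\hat{B}) = 1 - |E(B)| + b$, so $\varepsilon(B) = 2 - \chi(\hat{B}) = |E(B)| + 1 - b$. Identity (ii) is \autoref{prop:ribbon_graphic_delta_matroid}. For identity (iii), note that $B$ admits the vertex disc alone as a spanning quasi-tree, so $\emptyset$ is a feasible set of $D(B)$, making $D(B)$ normal. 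Combined with the excerpt's observation that $D(B) = D(I(B)) = D(M)$, normality yields
\[
w(D(B)) = \max\{|S| : S \subseteq E(B) \text{ and } M[S] \text{ is nonsingular}\}.
\]

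It remains to show that for any symmetric matrix $M$ over $\F_2$, $\rank M$ equals the maximum size of a nonsingular principal submatrix. The inequality $\rank M \geq \max |S|$ is immediate, since the columns of $M$ indexed by a set $S$ with $M[S]$ nonsingular are linearly independent. For the reverse, I would induct on the size of $M$: if $M$ has a nonzero diagonal entry $M_{ii}$, perform symmetric Schur-complement elimination on it to produce a symmetric matrix $M'$ of one smaller size with $\rank M' = \rank M - 1$, and extend a maximal nonsingular principal submatrix of $M'$ (guaranteed by induction) by the pivot index $i$; otherwise the diagonal is entirely zero, and any nonzero off-diagonal entry $M_{ij}$ yields a nonsingular $2 \times 2$ principal block $M[\{i,j\}]$ on which to perform an analogous symmetric pivot, reducing the rank by $2$. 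This linear-algebraic step is where the main effort lies, although the pivoting argument is standard; the topological and delta-matroidal ingredients above are essentially bookkeeping assembling results already collected in the excerpt.
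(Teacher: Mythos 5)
\autoref{lemma:corank} is quoted from Mellor's paper and is not proved in this paper, so there is no in-paper argument to compare against; your proposal has to stand on its own. Several of its ingredients do: the Euler-characteristic identity $b = |E(B)| + 1 - \varepsilon(B)$ obtained by capping off boundary circles is correct (a bouquet is connected, so $\varepsilon = 2 - \chi(\hat{B})$ applies), the appeal to \autoref{prop:ribbon_graphic_delta_matroid} is legitimate, and the linear-algebra fact that the rank of a symmetric matrix over $\F_2$ equals the largest order of a nonsingular principal submatrix is true and is already available as \autoref{lemma:rank_principal} (together with \autoref{lemma:width_is_rank}), so the pivoting induction you sketch, while correct, need not be redone.

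The genuine gap is the step where you invoke ``the excerpt's observation that $D(B) = D(I(B)) = D(M)$.'' In the paper that equality is not an independent fact: it is stated immediately before \autoref{lemma:corank} with the words ``this can be seen from the fact below,'' i.e.\ it is \emph{derived from} the very lemma you are proving. Unwinding definitions, $D(B) = D(I(B))$ asserts that for every $A \subseteq E(B)$ the spanning sub-bouquet on $A$ has exactly one boundary component if and only if $M[A]$ is nonsingular --- which is precisely the $\corank = 0$ case of the lemma applied to every sub-bouquet of $B$. So your argument assumes a form of its own conclusion, and all of the actual topological content --- why the number of boundary circles of a (possibly twisted) chord diagram is controlled by the $\F_2$-corank of its interlacement matrix, the Cohn--Lempel/trip-matrix phenomenon --- is concentrated in that one unproved citation. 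To close the gap you would need a direct argument, e.g.\ induction on $|E(B)|$ tracking simultaneously how the boundary-component count and $\corank M$ change when an edge is removed, or the permutation/orbit-counting proof. Your observation that the weak form (``quasi-tree iff nonsingular principal submatrix'') upgrades, via Euler-characteristic bookkeeping and \autoref{prop:ribbon_graphic_delta_matroid}, to the exact count $b = \corank M + 1$ is a nice reduction, but the weak form is where the theorem lives, and it is exactly what remains unproved.
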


One may recover a looped simple graph $G$ from its delta-matroid $D(G)=(V(G),\mathcal{F})$. Indeed, the adjacency matrix of $G$ is given by
\[
M_{vv}=\begin{cases}
    1 &\text{if } \{v\}\in\mathcal{F} \\
    0 &\text{otherwise}
\end{cases}
\]
for all $v\in V(G)$, and
\[
M_{uv}=\begin{cases}
    1 &\text{if }\{u,v\}\in \mathcal{F}\text{ and }\{u\},\{v\}\text{ are not both in }\mathcal{F},\text{ or if }\{u,v\}\not\in\mathcal{F}\text{ and }\{u\},\{v\}\in\mathcal{F} \\
    0 &\text{otherwise}
\end{cases}
\]
for all distinct $u,v\in V(G)$. \autoref{lemma:Bouchet_repr} implies that every normal, binary delta-matroid $D$ is of the form $D=D*\emptyset=D(M)$ for some symmetric matrix $M$ over $\F_2$. Thus, looped simple graphs are in one-to-one correspondence with normal, binary delta-matroids.

The loop complementation operation was generalized to set systems by Brijder and Hoogeboom in \cite{Brijder_Hoogeboom_2011}. Given a set system $S=(E,\mathcal{F})$ and $e\in E$, they defined the set system $S+e=(E,\mathcal{F}')$, where a subset $F\subseteq E$ belongs to $\mathcal{F}'$ if and only if
\[
\begin{cases}
    F\in\mathcal{F}\text{ or }F\setminus e\in\mathcal{F}\text{ but not both} &\text{if }e\in F \\
    F\in\mathcal{F} &\text{if }e\not\in F.
\end{cases}
\]

\begin{prop}\label{prop:loop_complementation_correspondence}
    (Theorem 8 in \cite{Brijder_Hoogeboom_2011}). Let $G$ be a looped simple graph. Then for all $v\in V(G)$,
    \[
    D(G+v)=D(G)+v.
    \]
\end{prop}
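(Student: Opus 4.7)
The plan is to verify the equality $D(G+v) = D(G)+v$ by checking that the two set systems have the same feasible sets, working directly from the adjacency-matrix definition of $D(G)$ and the Brijder--Hoogeboom definition of $+v$. Let $M$ denote the adjacency matrix of $G$ and $M'$ the adjacency matrix of $G+v$. The matrices $M$ and $M'$ agree everywhere except in the $(v,v)$ entry, where they differ by $1$ (in $\F_2$). I need to show, for every $F\subseteq V(G)$, that $M'[F]$ is nonsingular if and only if $F$ satisfies the feasibility condition for $D(G)+v$.

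The case $v\notin F$ is immediate: then $M'[F]=M[F]$, so $F$ is feasible in $D(G+v)$ iff $F$ is feasible in $D(G)$, which is exactly the condition in the definition of $D(G)+v$ when $v\notin F$.

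The substantive case is $v\in F$. Here $M'[F]$ is obtained from $M[F]$ by toggling the single diagonal entry indexed by $v$. Using multilinearity of the determinant in the row indexed by $v$, over $\F_2$ we can write
\[
\det(M'[F]) = \det(M[F]) + \det(N),
\]
where $N$ is the matrix obtained from $M[F]$ by replacing its $v$-th row with the standard basis vector $e_v$. Expanding $\det(N)$ along the $v$-th row picks up only the $(v,v)$-cofactor, which is the determinant of the principal submatrix obtained from $M[F]$ by deleting row and column $v$, namely $\det(M[F\setminus v])$. Thus
\[
\det(M'[F]) = \det(M[F]) + \det(M[F\setminus v]) \quad\text{in } \F_2.
\]
Consequently $M'[F]$ is nonsingular iff exactly one of $M[F]$ and $M[F\setminus v]$ is nonsingular, which is precisely the condition that $F$ is feasible in $D(G)+v$ when $v\in F$.

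There is no real obstacle in this argument; the only subtle point is making the multilinearity step rigorous in characteristic $2$, and being careful that while toggling the $(v,v)$ entry changes both the $v$-th row and $v$-th column of the symmetric matrix, viewing the change as happening only to the $v$-th row (and treating the $v$-th column as unchanged) is legitimate since the single entry where the two views overlap is the $(v,v)$ entry itself, which gets the same toggle either way. Combining the two cases completes the proof.
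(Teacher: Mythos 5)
Your argument is correct. The paper itself gives no proof of this statement---it is quoted as Theorem~8 of Brijder--Hoogeboom \cite{Brijder_Hoogeboom_2011}---so there is no in-paper argument to compare against; what you have written is a clean, self-contained verification in the adjacency-matrix picture. The key identity $\det(M'[F]) = \det(M[F]) + \det(M[F\setminus v])$ over $\F_2$ is exactly right: multilinearity of the determinant in the $v$-th row is characteristic-independent, the cofactor expansion of $N$ along the row $e_v$ isolates the principal minor $M[F\setminus v]$, and your remark about the row/column overlap is the correct way to dispose of the apparent symmetry issue, since the two matrices genuinely differ in only the single entry $(v,v)$. One small point worth making explicit: when $F=\{v\}$ the formula reads $\det(M'[\{v\}]) = M_{vv}+\det(M[\emptyset])$, which is consistent precisely because of the paper's convention that $M[\emptyset]$ is nonsingular (empty determinant equal to $1$); it is good practice to check that this edge case of the convention lines up with the XOR condition, and it does.
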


The partial Petrial corresponds to loop complementation: for a bouquet $B$ and $A\subseteq E(B)$, it is easy to see that $I(B^{\tau(A)})=I(B)+A$, the result of taking the loop complementation at each $v\in A$ on $I(B)$. Moreover, if $A$ is a spanning quasi-tree, then partial duality with respect to $A$ corresponds to an operation on the adjacency matrix of $I(B)$ called the principal pivot transform \cite{Tucker_1960, Brijder_Hoogeboom_2011}.

Yan and Jin \cite{Yan_Jin_2022_twist} defined the twist polynomial for delta-matroids, which extends the partial-dual Euler-genus polynomial. Namely, given a delta-matroid $D=(E,\mathcal{F})$, the \textit{twist polynomial} of $D$ is
\[
\T{D}(z)=\sum_{A\subseteq E}z^{w(D*A)}.
\]
It follows immediately from \autoref{prop:ribbon_graphic_delta_matroid} that for any ribbon graph $\Gamma$,
\[
\pDe{\Gamma}(z)=\T{D(\Gamma)}(z).
\]
Thus, $\T{D(G)}(z)=\IP_G(z)$ for all intersection graphs $G$, so the twist polynomial extends the intersection polynomial to a polynomial on all looped simple graphs. Henceforth, we use the notation $\T{G}(z)=\T{D(G)}(z)$ for looped simple graphs $G$.

Note that for all $A,B\subseteq E$, we have that $(D*A)*B)=D*(A\triangle B)$, so $\T{D*A}(z)=\T{D}(z)$. Since twisting $D$ by any of its feasible sets yields a normal delta-matroid, we may restrict to normal delta-matroids to study the twist polynomial, just as we restrict to bouquets to study the partial-dual Euler-genus polynomial.

The twist polynomial is multiplicative over the direct sum of two delta-matroids.
\begin{prop}\label{prop:twist_polynomial_of_direct_sum}
    (Proposition 7 in \cite{Yan_Jin_2022_twist}). If $D_1=(E_1,\mathcal{F}_1)$ and $D_2=(E_2,\mathcal{F}_2)$ are two delta-matroids with $E_1\cap E_2=\emptyset$, then
    \[
    \T{D_1\oplus D_2}(z)=\T{D_1}(z)\T{D_2}(z).
    \]
\end{prop}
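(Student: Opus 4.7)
The plan is to reduce the statement to two facts: twisting commutes with direct sum coordinate-wise, and the width function is additive over direct sums. Given these, the proof becomes a straightforward factoring of the sum defining the twist polynomial.

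First I would show that for any $A \subseteq E_1 \sqcup E_2$, if we write $A_1 = A \cap E_1$ and $A_2 = A \cap E_2$, then
\[(D_1 \oplus D_2) * A = (D_1 * A_1) \oplus (D_2 * A_2).\]
This is immediate from the definitions: a feasible set of $D_1 \oplus D_2$ has the form $F_1 \sqcup F_2$ with $F_i \in \mathcal{F}_i$, and since $E_1 \cap E_2 = \emptyset$, we have $(F_1 \sqcup F_2) \triangle A = (F_1 \triangle A_1) \sqcup (F_2 \triangle A_2)$.

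Next I would verify that $w(D_1 \oplus D_2) = w(D_1) + w(D_2)$. Because the feasible sets of $D_1 \oplus D_2$ are exactly the disjoint unions $F_1 \sqcup F_2$, we have $|F_1 \sqcup F_2| = |F_1| + |F_2|$, so the maximum (resp.\ minimum) feasible set cardinality of $D_1 \oplus D_2$ is the sum of the maximum (resp.\ minimum) feasible set cardinalities of $D_1$ and $D_2$. Subtracting gives additivity of the width.

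Combining these two observations, I would compute
\[\T{D_1 \oplus D_2}(z) = \sum_{A_1 \subseteq E_1}\sum_{A_2 \subseteq E_2} z^{w((D_1 * A_1) \oplus (D_2 * A_2))} = \sum_{A_1 \subseteq E_1}\sum_{A_2 \subseteq E_2} z^{w(D_1 * A_1) + w(D_2 * A_2)},\]
which factors as $\T{D_1}(z)\T{D_2}(z)$. There is no real obstacle here; the only subtlety is confirming the two auxiliary identities, and both are essentially bookkeeping with the fact that $E_1$ and $E_2$ are disjoint.
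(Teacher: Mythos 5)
Your proposal is correct: the two auxiliary identities (twisting distributes over the direct sum coordinate-wise, and width is additive over direct sums) both hold for the reasons you give, and the factorization of the sum over $A\subseteq E_1\sqcup E_2$ into a double sum is valid. The paper cites this as Proposition 7 of Yan and Jin and gives no proof of its own, but your argument is the standard one and matches the approach one would expect.
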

In particular, since $D(G_1\sqcup G_2)=D(G_1)\oplus D(G_2)$ for any two looped simple graphs $G_1$ and $G_2$, the twist polynomial on graphs is multiplicative over disjoint union. We show in \autoref{thm:looped_recursion} that an analogous identity can be obtained in the more complicated setting of the one-point join.

\section[A Rank Formula for the Twist Polynomial on Looped Simple Graphs]{A Rank Formula for the Twist Polynomial on Looped \\ Simple Graphs}\label{sec:A Rank Formula for the Twist Polynomial on Looped Simple Graphs}
Here, we obtain an expression for the twist polynomial of a looped simple graph $G$ in terms of the ranks of principal submatrices of the adjacency matrix of $G$. Note that the expression for the twist polynomial that we derive in \autoref{prop:twist_polynomial_on_binary_delta_matroids} was obtained for the special case of intersection graphs by Cheng in \cite{Cheng_2025} using \autoref{lemma:corank}.

\begin{lemma}\label{lemma:rank_principal}
    Let $\F$ be a field and let $M$ be an $n\times n$ matrix over $\F$. If $M$ is symmetric or skew-symmetric, then
    \[
    \rank M=\max\{|S|:S\subseteq\{1,\dots,n\} \text{ and } M[S]\text{ is nonsingular}\}.
    \]
\end{lemma}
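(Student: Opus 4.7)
The inequality $\rank M \geq \max\{|S|:M[S]\text{ nonsingular}\}$ is immediate: if $M[S]$ is nonsingular, then the rows of $M$ indexed by $S$ have linearly independent restrictions to the columns in $S$, hence are themselves linearly independent in $\F^n$, giving $\rank M \geq |S|$. For the reverse inequality I would induct on $n$ to construct $S$ with $|S| = \rank M$ and $M[S]$ nonsingular, using symmetric Schur complements. Setting $r = \rank M$, the cases $r = 0$ (take $S = \emptyset$) and $r = n$ (take $S = \{1, \ldots, n\}$) are trivial, so suppose $0 < r < n$ and split according to whether $M$ has a nonzero diagonal entry.

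If some $M_{ii} \neq 0$, I would use the $1 \times 1$ pivot $A = M[\{i\}]$; otherwise $M$ has zero diagonal, and since $r > 0$ some $M_{ij} \neq 0$ for $i \neq j$, so the $2 \times 2$ principal submatrix $A = M[\{i,j\}]$ has determinant $\pm M_{ij}^2 \neq 0$ and serves as pivot. In either case, writing $M$ in block form with top-left block $A$ (after permuting coordinates), symmetric Gaussian elimination converts $M$ by an invertible congruence into a block-diagonal matrix with diagonal blocks $A$ and $M'$, where $M'$ is the Schur complement of $A$ in $M$. One checks that $M'$ inherits the symmetry type of $M$: the inverse of an invertible symmetric (resp.\ skew-symmetric) matrix has the same symmetry, and this propagates through the Schur complement formula. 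Since congruence preserves rank, $\rank M' = r - \rank A$, so by induction there exists $S'$ disjoint from the pivot indices with $|S'| = \rank M'$ and $M'[S']$ nonsingular.

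The last step relies on the commutation property that taking a Schur complement commutes with restriction to a principal submatrix containing the pivot indices: the $(a,b)$-entry of $M'$ depends only on $A$ and on the entries of $M$ in rows and columns $\{a,b\}$, so $(M[S])' = M'[S']$ when $S = S' \cup (\text{pivot indices})$. Hence $M[S]$ is congruent to the block-diagonal matrix with blocks $A$ and $M'[S']$, which is nonsingular, giving $|S| = r$ as required. The main obstacle I anticipate is verifying the symmetry-preservation of the Schur complement uniformly in both cases---particularly over characteristic $2$, where the symmetric and skew-symmetric notions can coincide---which ultimately reduces to checking that $(B^T A^{-1} B)^T = \pm B^T A^{-1} B$ matches the symmetry type of $M$.
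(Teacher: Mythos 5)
Your proposal is correct. Note, though, that the paper's own proof is essentially a two-line appeal to the literature: the easy inequality is observed exactly as you do, and the existence of a nonsingular principal submatrix of order $\rank M$ is delegated to a standard reference (Section 0.7.6 of Horn and Johnson). What you have written is a self-contained proof of that cited fact by induction via congruence (symmetric Gaussian elimination) with $1\times1$ or $2\times2$ pivots, and all the steps check out: the pivot block is invertible in each case (a nonzero diagonal entry, or a $2\times2$ alternating block with determinant $\mp M_{ij}^2$); the Schur complement inherits the symmetry type because $(A^{-1})^T=\pm A^{-1}$ propagates to $\mp C A^{-1} B = \mp(\pm B^T)A^{-1}B$; congruence to $\mathrm{diag}(A,M')$ gives $\rank M'=r-\rank A$; and the locality of the Schur complement formula gives the commutation with principal restriction, so $M[S'\cup P]$ is congruent to $\mathrm{diag}(A,M'[S'])$ and hence nonsingular of order $r$. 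The characteristic-$2$ worry you flag is handled correctly, since there the two symmetry notions coincide and the symmetric computation applies verbatim. Your version buys self-containment (and an explicit algorithm for finding $S$, which is in the spirit of the principal pivot transforms mentioned later in the paper) at the cost of length; the paper's version keeps the lemma short by outsourcing the only nontrivial direction.
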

\begin{proof}
    The order of any nonsingular principal submatrix of $M$ cannot be greater than $\rank M$. If $M$ is symmetric or skew-symmetric, then $M$ has a nonsingular principal submatrix of order $\rank M$ (e.g., see Section 0.7.6 of \cite{Horn_Johnson_2012}).
\end{proof}

\begin{lemma}\label{lemma:width_is_rank}
    Let $\F$ be a field and suppose $M$ is a skew-symmetric matrix over $\F$. Then for all subsets $A$ of the ground set of $D(M)$,
    \[
    w(D(M)|_A)=\rank M[A].
    \]
\end{lemma}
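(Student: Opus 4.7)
The plan is to unwrap the definitions so that both sides of the identity are expressed as the maximum size of a subset $S \subseteq A$ for which $M[S]$ is nonsingular, and then invoke \autoref{lemma:rank_principal} to close the loop.

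First I would observe that $D(M)|_A = D(M) \setminus A^c$ is a normal delta-matroid: by convention $M[\emptyset]$ is nonsingular, so $\emptyset$ is feasible in $D(M)$, and hence also in $D(M)|_A$. Consequently
\[
\min\{|F| : F \in \mathcal{F}(D(M)|_A)\} = 0,
\]
which gives
\[
w(D(M)|_A) = \max\{|F| : F \in \mathcal{F}(D(M)|_A)\}.
\]
(Alternatively one may apply the normal-delta-matroid form of \autoref{lemma:wdith_of_restriction}: for $F \subseteq A$ one has $|A \triangle F| = |A| - |F|$, so the same identity follows directly.)

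Next I would identify the feasible sets. By the definitions of $D(M)$ and of restriction, $\mathcal{F}(D(M)|_A) = \{S \subseteq A : M[S] \text{ is nonsingular}\}$, which is precisely $\mathcal{F}(D(M[A]))$ since $(M[A])[S] = M[S]$ for $S \subseteq A$. Because $M$ is skew-symmetric, so is its principal submatrix $M[A]$. Applying \autoref{lemma:rank_principal} to $M[A]$ then yields
\[
\rank M[A] = \max\{|S| : S \subseteq A \text{ and } M[S] \text{ is nonsingular}\},
\]
which matches the expression for $w(D(M)|_A)$ above, completing the proof.

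There is no real obstacle here beyond careful bookkeeping: the content of the lemma is essentially that \autoref{lemma:rank_principal} translates verbatim into the language of delta-matroid width once one recognizes that the restriction $D(M)|_A$ is normal.
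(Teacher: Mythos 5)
Your proposal is correct and follows essentially the same route as the paper: identify $D(M)|_A$ with the normal delta-matroid $D(M[A])$, note that its width is the maximum size of a nonsingular principal submatrix contained in $A$, and apply \autoref{lemma:rank_principal} to the skew-symmetric matrix $M[A]$. You simply spell out the bookkeeping that the paper's one-line proof leaves implicit.
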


\begin{proof}
    By \autoref{lemma:rank_principal} and the fact that $D(M[A])$ is normal, we have that
    \[
    w(D(M)|_A)=w(D(M[A]))=\max\{|S|:S\subseteq A\text{ and }M[S]\text{ is nonsingular}\}=\rank (M[A]).
    \]
\end{proof}

\begin{prop}\label{prop:twist_polynomial_on_binary_delta_matroids}
    Let $G$ be a looped simple graph with adjacency matrix $M$. Then for all $A\subseteq V(G)$,
    \[
    w(D(G)*A)=\rank(M[A])+\rank(M[A^c]).
    \]
    In particular,
    \[
    \T{G}(z)=\sum_{A\subseteq V(G)}z^{\rank(M[A])+\rank(M[A^c])}.
    \]
\end{prop}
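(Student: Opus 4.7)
The plan is to chain together the two lemmas immediately preceding the proposition, using the normality of $D(G)$ as the bridge. First I would observe that $D(G) = D(M)$ where $M$ is the adjacency matrix of $G$, viewed as a matrix over $\F_2$. Since $M[\emptyset]$ is nonsingular by convention, we have $\emptyset \in \mathcal{F}(D(G))$, so $D(G)$ is a normal delta-matroid.

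Next I would apply \autoref{lemma:width_of_twist_of_normal_delta_matroid} to the normal delta-matroid $D(G)$: for any $A \subseteq V(G)$,
\[
w(D(G)*A) = w(D(G)|_A) + w(D(G)|_{A^c}).
\]
Then I would apply \autoref{lemma:width_is_rank} to rewrite each of the two terms on the right. Since over $\F_2$ every symmetric matrix is also skew-symmetric, the adjacency matrix $M$ qualifies, and the lemma yields $w(D(G)|_A) = \rank M[A]$ and $w(D(G)|_{A^c}) = \rank M[A^c]$. Combining gives the first equation of the proposition.

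For the ``in particular'' clause, I would simply substitute the rank formula into the definition of the twist polynomial:
\[
\T{G}(z) = \T{D(G)}(z) = \sum_{A \subseteq V(G)} z^{w(D(G)*A)} = \sum_{A \subseteq V(G)} z^{\rank(M[A]) + \rank(M[A^c])}.
\]

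There is really no main obstacle here; the proof is essentially assembly of prior results. The only point requiring a moment's care is verifying the hypotheses: namely checking that $D(G)$ is normal (so that \autoref{lemma:width_of_twist_of_normal_delta_matroid} applies) and noting that \autoref{lemma:width_is_rank} can be invoked with the symmetric $\F_2$-matrix $M$ because symmetric and skew-symmetric coincide in characteristic $2$.
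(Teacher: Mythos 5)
Your proposal matches the paper's proof exactly: both note that $D(G)$ is normal, apply \autoref{lemma:width_of_twist_of_normal_delta_matroid} to split $w(D(G)*A)$ into $w(D(G)|_A)+w(D(G)|_{A^c})$, and then invoke \autoref{lemma:width_is_rank} on each term. The extra care you take in justifying normality and the symmetric/skew-symmetric coincidence over $\F_2$ is correct and only makes the argument more explicit.
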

\begin{proof}
    Note that $D(G)$ is a normal delta-matroid. Thus, for every $A\subseteq V(G)$, 
    \[
    w(D(G)*A)=w(D(G)|_A)+w(D(G)|_{A^c})=\rank(M[A])+\rank(M[A^c])
    \]
    by \autoref{lemma:width_of_twist_of_normal_delta_matroid} and \autoref{lemma:width_is_rank}.
\end{proof}

\section{The Twist Polynomial of a One-Point Join of Graphs}\label{sec:The Twist Polynomial of a One-Point Join of Graphs}

In this section, we prove the recursion formulas in \autoref{thm:looped_recursion} and \autoref{thm:unlooped_recursion}. The tools we develop yield two additional results. We get a relation for all looped simple graphs $G$ and all vertices $v$ of $G$ that relates the twist polynomials of $G$, $G+v$, and $G-v$ evaluated at $-1/2$. Furthermore, when $G$ is unlooped, we obtain an expression for the twist polynomial of $G+v$ in terms of the twist polynomials of $G$ and $G-v$. We conclude this section with an example showing how \autoref{thm:unlooped_recursion} can be used to compute the twist polynomial of the windmill graph.

The central tool needed to prove \autoref{thm:looped_recursion} lies in the next lemma, which allows us to compute, for a fixed $G_1$ and $v_1\in V(G_1)$, a recursion formula that expresses $\T{(G_1,v_1)\vee (G_2,v_2)}(z)$ as a $\Z[z]$-linear combination of $\T{G_2}(z)$, $\T{G_2+v_2}(z)$, and $\T{G_2-v_2}(z)$.

We introduce some notation. For a looped simple graph $G$, a vertex $v$ of $G$, and a value $\delta\in\{0,1\}$, we let $G+\delta v=G+v$ if $\delta=1$ and $G+\delta v=G$ if $\delta=0$. Given a one-point join $(G_1,v_1)\vee (G_2,v_2)$ of two looped simple graphs, let $V_1=V(G_1)$ and $V_2=V(G_2)$. We identify $V_1$ and $V_2$ with their images under the identification $v_1\sim v_2$, and refer to the identified vertex as $v$. We say that $G_1$ and $G_2$ are \textit{joined} at $v$. Let $M$ be the adjacency matrix of $G_1\vee G_2$, with rows and columns indexed by the vertices of $G$ in an order such that all vertices of $G_1- v$ precede $v$, and $v$ precedes all vertices of $G_2- v$. For $X\subseteq V_1$ with $v\in X$, write $M[X]=(x_{ij})_{i,j=1}^n$ (note that the index $n$ corresponds to the vertex $v$). Define $\delta_X=x_{nn}+1$ if $x_{nn}$ is in a pivot position of $M[X]$ and $\delta_X=x_{nn}$ otherwise. Let $P_1$ be the collection of $X\subseteq V_1$ such that $v\in X$ and 
        \[\begin{bmatrix}
        x_{1,n} \\
        \vdots \\
        x_{n-1,n}
    \end{bmatrix}\in\SPAN\left\{\begin{bmatrix}
        x_{1,1} \\
        \vdots \\
        x_{n-1,1}
    \end{bmatrix},\dots,\begin{bmatrix}
        x_{1,n-1} \\
        \vdots \\
        x_{n-1,n-1}
    \end{bmatrix}\right\}.
    \]
    We make the convention that $\{v\}\in P_1$. Let $P_2=\{X\subseteq V_1:v\in X\text{ and }X\not\in P_1\}$.
    
\begin{lemma}\label{lemma:recursion_existence}
    Let $G_1\vee G_2$ be a one-point join of two looped simple graphs, joined at a vertex $v$, and let $D=D(G_1\vee G_2)$. Then
    \[
    \T{G_1\vee G_2}(z)=\sum_{X\in P_1}z^{w(D|_{V_1-v}*(X-v))}\T{G_2+\delta_X v}(z)+2\sum_{X\in P_2}z^{w(D|_{V_1}*X)}\T{G_2-v}(z)
    \]
    Moreover, if $G_1-v$ is unlooped, then $\delta_X=0$ for all $v\in X\subseteq V(G_1)$. 
    
\end{lemma}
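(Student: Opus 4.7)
The plan is to expand $\T{G_1\vee G_2}(z)$ via Proposition \ref{prop:twist_polynomial_on_binary_delta_matroids} and organize the resulting rank sum by how the cut vertex $v$ is distributed between $A$ and $A^c$. Writing $M$ for the adjacency matrix of $G_1\vee G_2$, the summand $z^{\rank M[A]+\rank M[A^c]}$ is invariant under $A\leftrightarrow A^c$, and $v$ lies in exactly one of $A,A^c$, so after restricting to $A\ni v$ and picking up a factor of $2$ one can write $A=X\cup(B-v)$ with $X=A\cap V_1\ni v$ and $B=A\cap V_2\ni v$. Since $v\notin A^c$, the matrix $M[A^c]$ is block-diagonal, giving
\[
\rank M[A^c]=\rank M_1'[(V_1-v)\setminus(X-v)]+\rank M_2'[(V_2-v)\setminus(B-v)],
\]
where $M_i'$ is the adjacency matrix of $G_i-v$.

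The main computation is $\rank M[A]$, which I handle by splitting on whether $X\in P_1$ or $X\in P_2$. The matrix $M[A]$ has block form with the row and column of $v$ bridging $M_1'[X-v]$ and $M_2'[B-v]$; write $a$ for the column of $v$ in $M_1$ restricted to $X-v$, and $b$ for the column of $v$ in $M_2$ restricted to $B-v$. If $X\in P_1$, pick $c$ with $M_1'[X-v]c=a$; the symmetric row/column operation subtracting the $c$-combination of the $X-v$ rows/columns from the $v$-row/column preserves rank and decouples $M[A]$ into the direct sum of $M_1'[X-v]$ with a $|B|\times|B|$ block whose $(v,v)$-entry has become $\tilde m_{vv}=m_{vv}+c^T a$. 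This second block is precisely the principal submatrix indexed by $B$ of the adjacency matrix $M_2^*$ of $G_2+\delta_X v$, so $\rank M[A]=\rank M_1'[X-v]+\rank M_2^*[B]$. If $X\in P_2$, compute $\ker M[A]$: any $(u_1,t,u_2)$ in the kernel satisfies $M_1'[X-v]u_1+t\,a=0$, and the defining property of $P_2$ (that $a\notin\mathrm{col}(M_1'[X-v])$) forces $t=0$. The remaining equations cut out a hyperplane in $\ker M_1'[X-v]\times\ker M_2'[B-v]$ via the linear form $(u_1,u_2)\mapsto a^Tu_1+b^Tu_2$, which is surjective onto $\F_2$ because $a\notin\mathrm{col}(M_1'[X-v])=(\ker M_1'[X-v])^\perp$. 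Combined with the $P_2$ identity $\rank M_1[X]=\rank M_1'[X-v]+2$, a dimension count gives $\rank M[A]=\rank M_1[X]+\rank M_2'[B-v]$.

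Finally, I sum over $B\ni v$ for each fixed $X$. For $X\in P_1$, since $M_2^*$ agrees with $M_2$ on every principal submatrix not involving $v$, the inner sum regroups into $\tfrac{1}{2}\T{G_2+\delta_X v}(z)$ via the analogous $C\leftrightarrow V_2\setminus C$ symmetry on $V_2$ applied to $G_2+\delta_X v$. For $X\in P_2$, the inner sum produces $\T{G_2-v}(z)$ directly. Multiplying by the outer factor of $2$ absorbs the $\tfrac{1}{2}$ in the $P_1$ part, leaves the $2$ in the $P_2$ part, and rewriting the remaining $V_1$-rank sums as widths through Lemmas \ref{lemma:width_of_twist_of_normal_delta_matroid} and \ref{lemma:width_is_rank} yields the claimed formula. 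For the \textit{moreover} clause: if $G_1-v$ is unlooped then $M_1'[X-v]$ has zero diagonal, so for $X\in P_1$, $\delta_X=c^T a=c^T M_1'[X-v]c=\sum_i c_i^2(M_1'[X-v])_{ii}=0$ in $\F_2$, as off-diagonal symmetric contributions cancel in characteristic $2$; the extension to $X\in P_2$ follows by unpacking the pivot-position condition under the same zero-diagonal hypothesis (and $\delta_X$ does not enter the formula in this case anyway). I expect the main obstacle to be the rank identity for $X\in P_2$: one must recognize that the $t=1$ branch of the kernel is obstructed exactly by the condition defining $P_2$, and that the surjectivity of the auxiliary linear form — required for the dimension count to give the right answer — comes purely from $a\notin\mathrm{col}(M_1'[X-v])$, independently of whether $b$ lies in $\mathrm{col}(M_2'[B-v])$.
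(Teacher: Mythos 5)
Your proposal is correct and follows essentially the same route as the paper: reduce to $A\ni v$ via the $A\leftrightarrow A^c$ symmetry, partition by $X=A\cap V_1$, split into $P_1$ and $P_2$, block-diagonalize $M[A^c]$, decouple $M[A]$ by symmetric row/column operations in the $P_1$ case, and regroup the inner sums over $V_2$ into $\tfrac12\T{G_2+\delta_X v}$ or $\T{G_2-v}$. The only (minor and equally valid) deviation is in the $P_2$ case, where you establish $\rank M[A]=\rank M[X]+\rank M_2'[B-v]$ by a kernel-dimension count exploiting $\mathrm{col}(M)=(\ker M)^\perp$ for symmetric matrices over $\F_2$, whereas the paper observes that inconsistency of the augmented system puts $[0,\dots,0,1]$ in the row span and uses row/column operations to kill the bridging block directly.
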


\begin{proof}
    For $X\subseteq V_1$ with $v\in X$, let 
    \[
    \tau_X=\{A\subseteq V:A\cap V_1=X\}
    \]
    and $M[X]=(x_{ij})_{i,j=1}^n$ with the index ordering discussed previously. For all $A\in \tau_X$,
    \[
    w(D*A)=\rank(M[A])+\rank(M[A^c])
    \]
    by \autoref{prop:twist_polynomial_on_binary_delta_matroids}. Since $v\not\in A^c$, we have that $M[A^c]$ is block-diagonal with two blocks, indexed by $A^c\cap V_1=X^c$ and $A^c\cap V_2$ respectively. Thus
    \[
    \rank(M[A^c])=\rank(M[X^c])+\rank(M[A^c \cap V_2]).
    \]
    We may write
    \[M[A]=
\begin{bmatrix}
    x_{11} & \cdots & x_{1n} &    \\
    \vdots & \ddots & \vdots &    \\
    x_{n1} & \cdots & x_{nn} & y^T  \\
     &  & y & Z
\end{bmatrix},\]
where the empty entries indicate zeros, $Z=M[A\cap (V_2-v)]$, and $M[A\cap V_2]=\begin{bmatrix}
    x_{nn} & y^T \\
    y & Z
\end{bmatrix}$.

    \emph{Case 1}. Suppose $X\in P_1$. Then
    $$\begin{bmatrix}
        x_{1,n} \\
        \vdots \\
        x_{n-1,n}
    \end{bmatrix}=\begin{bmatrix}
        x_{1,j_1} \\
        \vdots \\
        x_{n-1,j_1}
    \end{bmatrix}+\cdots+\begin{bmatrix}
        x_{1,j_r} \\
        \vdots \\
        x_{n-1,j_r}
    \end{bmatrix}$$
    for some $1\leq j_1<j_2<\cdots<j_r\leq n-1$. So, through a series of elementary row and column operations, we can transform $M[A]$ into the following block-diagonal matrix:
    \[M[A]=
\begin{bmatrix}
    x_{11} & \cdots & x_{1,n-1} &  &   \\
    \vdots & \ddots & \vdots &  &    \\
    x_{n-1,1} & \cdots & x_{n-1,n-1} &   \\
     &  &  & x_{nn}+\delta & y^T \\
     & & & y & Z
\end{bmatrix}\]
where $\delta=x_{j_1j_1}+\cdots+x_{j_rj_r}$. Note that $\delta=\delta_X$, so $\delta$ is independent of the choice of linear combination that witnesses $X$ belonging to $P_1$. Evidently, if $G_1-v$ is unlooped, then $\delta_X=0$. Let $M'$ be the adjacency matrix of $G+\delta_X v$, and let $D'$ be the binary delta-matroid obtained from $M'$. We have that
\[
\rank(M[A])=\rank(M[X-v])+\rank(M' [A\cap V_2]),
\]
so
\[
w(D*A)=w(D|_{V_1- v}*(X-v))+w(D'|_{V_2}*(A\cap V_2)).
\]
Thus,
\begin{align*}
    \sum_{A\in\tau_X}z^{w(D*A)}
    &=z^{w(D|_{V_1-v}*(X-v))}\sum_{A\in\tau_X}z^{w(D'|_{V_2}*(A\cap V_2))} \\
    &=z^{w(D|_{V_1-v}*(X-v))}\sum_{v\in B\subseteq V_2}z^{w(D'|_{V_2}*B)} \\
    &=\frac{1}{2}z^{w(D|_{V_1-v}*(X-v))}\T{G_2+\delta_X v}(z).
\end{align*}

    \emph{Case 2}. Suppose $X\in P_2$. Then the system of linear equations over $\F_2$ corresponding to the augmented matrix
    \[\begin{amatrix}{3}
   x_{1,1} & \dots & x_{1,n-1} & x_{1,n} \\
   \vdots & \ddots & \vdots & \vdots \\
   x_{n-1,1} & \dots & x_{n-1,n-1} & x_{n-1,n}
 \end{amatrix}\]
 is inconsistent, so the rightmost column of this augmented matrix is a pivot column (contains a row-leading 1 in the reduced row echelon form), which means
 \[[0,0\dots,0,1]\in\SPAN\left\{[
        x_{1,1},\dots, x_{1,n}]
    ,\dots,[
        x_{n-1,1},\dots, x_{n-1,n}]
    \right\}.\]
    We can then transform $M[A]$ into the following block-diagonal matrix using elementary row and column operations:
    \[M[A]=
\begin{bmatrix}
    x_{11} & \cdots & x_{1n} &    \\
    \vdots & \ddots & \vdots &    \\
    x_{n1} & \cdots & x_{nn} &   \\
     &  &  & Z
\end{bmatrix}.\]
Thus, $\rank(M[A])=\rank(M[X])+\rank(M[A\cap(V_2-v)])$, so
\[
w(D*A)=w(D|_{V_1}*X)+w(D|_{V_2-v}*(A\cap(V_2-v))).
\]
Then
\[
\sum_{A\in\tau_X}z^{w(D*A)}=z^{w(D|_{V_1}*X)}\sum_{A\in\tau_X}z^{w(D|_{V_2-v}*(A\cap(V_2-v)))}=z^{w(D|_{V_1}*X)}\T{G_2-v}(z).
\]
Altogether, we get that
\begin{align*}
    \T{G_1\vee G_2}(z)
    &=2\sum_{v\in X\subseteq V_1}\sum_{A\in\tau_X}z^{w(D*A)} \\
&=\sum_{X\in P_1}z^{w(D|_{V_1-v}*(X-v))}\T{G_2+\delta_X v}(z)+2\sum_{X\in P_2}z^{w(D|_{V_1}*X)}\T{G_2-v}(z)
\end{align*}
with $\delta_X=0$ for all $v\in X\subseteq V_1$ if $G_1-v$ is unlooped.
\end{proof}

Next, we apply \autoref{lemma:recursion_existence} to two specific cases of $G_1$, which will be used to derive the coefficients in \autoref{thm:looped_recursion}. Note that part (a) of \autoref{prop:leaf_recursion} extends Yan and Jin's recursion for signed intersection graphs (\autoref{prop:Yan_Jin_recurrence}) to looped simple graphs; Cheng also obtained an extension of \autoref{prop:Yan_Jin_recurrence} to (unlooped) simple graphs in Theorem 4.1 of \cite{Cheng_2025}.
\begin{prop}\label{prop:leaf_recursion}
    Let $G_1$ be a graph consisting of two vertices $w$ and $v_1$ connected by an edge. Consider $(G_1,v_1)\vee (G_2,v_2)$ for any looped simple graph $G_2$ with a vertex $v_2$ having the same loop status as $v_1$.
    \begin{enumerate}[(a)]
        \item If $w$ is unlooped, then
        \[
        \T{G_1\vee G_2}=\T{G_2}+2z^2\T{G_2-v_2}.
        \]
        \item If $w$ is looped, then
        \[
        \T{G_1\vee G_2}=z\T{G_2}+z\T{G_2+v_2}.
        \]
    \end{enumerate}
\end{prop}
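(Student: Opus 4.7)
The plan is to apply Lemma \ref{lemma:recursion_existence} with the given $G_1$. Writing $V_1=\{w,v\}$ with $v=v_1$, the only subsets of $V_1$ containing $v$ are $\{v\}$ and $\{w,v\}$, and the relevant principal submatrix over $\F_2$ is
\[
M[V_1]=\begin{pmatrix} x_{ww} & 1 \\ 1 & x_{vv} \end{pmatrix},
\]
where $x_{ww},x_{vv}\in\{0,1\}$ record the loop statuses of $w$ and $v$.

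The first step is to classify $\{v\}$ and $\{w,v\}$ into $P_1$ and $P_2$. By convention $\{v\}\in P_1$, and $\{w,v\}\in P_1$ iff the last column $[x_{1,2}]=[1]$ lies in the $\F_2$-span of $[x_{1,1}]=[x_{ww}]$. This holds exactly when $x_{ww}=1$, so $\{w,v\}\in P_2$ in case (a) and $\{w,v\}\in P_1$ in case (b).

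The second step is to compute the width exponents and $\delta_X$ values. Since $V_1-v=\{w\}$, the delta-matroid $D|_{V_1-v}$ has feasible sets $\{\emptyset\}$ if $w$ is unlooped (width $0$) and $\{\emptyset,\{w\}\}$ if $w$ is looped (width $1$), and twisting by $\{w\}$ does not change the width. For $X=\{v\}$, the $1\times 1$ matrix $[x_{vv}]$ has its single entry in pivot position iff $x_{vv}=1$, so $\delta_{\{v\}}=0$ in both subcases (either $\delta=x_{vv}=0$ or $\delta=x_{vv}+1=0$ mod $2$), and therefore $G_2+\delta_{\{v\}} v=G_2$. For $X=\{w,v\}$ in case (b), row-reducing the $2\times 2$ matrix shows that there is a $(2,2)$-pivot exactly when $x_{vv}=0$, and the two subcases yield $\delta=0+1=1$ and $\delta=x_{vv}=1$ respectively, so $\delta_{\{w,v\}}=1$ uniformly. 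For the $P_2$ term in case (a), Lemmas \ref{lemma:width_of_twist_of_normal_delta_matroid} and \ref{lemma:width_is_rank} give
\[
w(D|_{V_1}*\{w,v\})=w(D|_{V_1})+w(D|_\emptyset)=\rank M[V_1]+0=2,
\]
since $M[V_1]$ has determinant $-1=1$ over $\F_2$ when $x_{ww}=0$.

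Substituting these values into the formula of Lemma \ref{lemma:recursion_existence} yields case (a): $\T{G_1\vee G_2}=z^{0}\T{G_2}+2z^{2}\T{G_2-v_2}=\T{G_2}+2z^{2}\T{G_2-v_2}$, and case (b): $\T{G_1\vee G_2}=z\T{G_2}+z\T{G_2+v_2}$. The only delicate piece is the pivot analysis needed to determine $\delta_X$, which requires splitting on the loop status of $v$; fortunately both subcases collapse to the same value in each case, so no further obstacle arises.
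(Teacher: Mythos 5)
Your proposal is correct and follows essentially the same route as the paper: both apply Lemma~\ref{lemma:recursion_existence} to the two-vertex $G_1$, classify $\{v\}$ and $\{w,v\}$ into $P_1$ or $P_2$ according to the loop status of $w$, and compute the same width exponents and $\delta_X$ values (including the observation that $\delta_{\{v\}}=0$ and $\delta_{\{w,v\}}=1$ independently of the loop status of $v$). No gaps.
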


\begin{proof}
    First suppose $w$ is unlooped. Let $M$ be the adjacency matrix of $G$. Then $M[V_1]=\begin{bmatrix}
        0&1 \\
        1&\lambda
    \end{bmatrix}$ where $\lambda=1$ if $v_1$ has a loop and $\lambda=0$ otherwise. So, $P_1=\{\{v_1\}\}$ and $P_2=\{\{w,v_1\}\}$. Now, 
    \[
    w(D|_{V_1-v}*(\{v\}-v))=w(D|_{V_1-v})=\rank(M[\{w\}])=0
    \]
    and
    \[
    w(D|_{V_1}*\{w,v\})=w(D|_{V_1})+w(D|_\emptyset)=\rank(M[V_1])=2.
    \]
    Note that $\delta_{\{v\}}=0$ because $v$ corresponds to a pivot position of $M[\{v\}]$ if and only if $\lambda=1$. \autoref{lemma:recursion_existence} then gives that
    $$\T{G_1\vee G_2}=\T{G_2}+2z^2\T{G_2-v}.$$
    Now suppose $w$ is looped, so that $M[V_1]=\begin{bmatrix}
        1&1 \\
        1&\lambda
    \end{bmatrix}$. Then $P_1=\{\{v\},\{w,v\}\}$ and $P_2=\emptyset$. We find that
    \[
    w(D|_{V_1-v}*(\{v\}-v))=w(D|_{V_1-v})=\rank(M[\{w\}])=1,
    \]
    \[
    w(D|_{V_1-v}*(\{w,v\}-v))=w(D|_{V_1-v})+w(D|_\emptyset)=1,
    \]
    $\delta_{\{v\}}=0$ as before, and $\delta_{\{w,v\}}=1$ since $v$ corresponds to a pivot position of $M[\{w,v\}]$ if and only if $\lambda=0$. Thus,
    \[
        \T{G_1\vee G_2}=z\T{G_2}+z\T{G_2+v}
        \]
        by \autoref{lemma:recursion_existence}.
\end{proof}

\begin{lemma}\label{lemma:recursion_uniqueness}
    Let $G_1$ be a looped simple graph and $v_1\in V(G_1)$. Suppose there exist $Q_1,Q_2,Q_3\in\Z[z]$ such that for all looped simple graphs $G_2$ and $v_2\in V(G_2)$ with the same loop status as $v_1$, the twist polynomial of $(G_1,v_1)\vee (G_2,v_2)$ can be expressed as
    \[
        \T{G_1\vee G_2}={Q_1}\T{G_2}+{Q_2}\T{G_2+v_2}+{Q_3}\T{G_2-v_2}.\tag{$*$}
    \]
    Then
    \[{Q_1}=\frac{2z^2\T{G_1-v_1}-(z+1)\T{G_1}+z\T{G_1+v_1}}{2(2z^2-z-1)}\]
\[{Q_2}=\frac{2z^2\T{G_1-v_1}-(z+1)\T{G_1+v_1}+z\T{G_1}}{2(2z^2-z-1)}\]
\[{Q_3}=\frac{z^2(\T{G_1}+\T{G_1+v_1})-2(z^3+z^2)\T{G_1-v_1}}{2z^2-z-1}.\]
\end{lemma}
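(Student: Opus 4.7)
My strategy would be to use the assumed identity $(*)$ to derive a $3 \times 3$ linear system in the unknowns $Q_1, Q_2, Q_3$ by substituting three carefully chosen test graphs $G_2$, then solve via Cramer's rule. Since $(*)$ is assumed to hold for all valid $G_2$, any three tests whose associated system is invertible force the unique values of $Q_1, Q_2, Q_3$.

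For the three tests I would take: $G_2^{(1)}$ a single vertex $v_2$; $G_2^{(2)}$ a two-vertex graph consisting of $v_2$ joined by an edge to an unlooped leaf $w$; and $G_2^{(3)}$ the same as $G_2^{(2)}$ but with $w$ looped. In each case $v_2$ is taken to have the loop status matching $v_1$. The coefficients $\T{G_2^{(i)}}, \T{G_2^{(i)}+v_2}, \T{G_2^{(i)}-v_2}$ that appear in the system are short, explicit polynomials in $z$ (each being $1$, $2$, $2z$, $2z^2+2$, or $2z^2+2z$), computed directly from \autoref{prop:twist_polynomial_on_binary_delta_matroids}. For the right-hand sides, $G_1 \vee G_2^{(1)}$ collapses to $G_1$ (or $G_1+v_1$, depending on the loop convention at the identified vertex), while for $G_2^{(2)}$ and $G_2^{(3)}$ the leaf recursion of \autoref{prop:leaf_recursion}, applied via the symmetry $G_1 \vee G_2 = G_2 \vee G_1$, expresses $\T{G_1 \vee G_2^{(i)}}$ as the $\Z[z]$-linear combinations $\T{G_1} + 2z^2\T{G_1-v_1}$ and $z\T{G_1} + z\T{G_1+v_1}$ respectively.

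The coefficient matrix of the resulting system has determinant proportional to $z(z-1)^2(2z+1)$, a nonzero element of $\Z[z]$, so Cramer's rule determines $Q_1, Q_2, Q_3$ uniquely as rational expressions in $z$. The main obstacle is the bookkeeping in expanding three $3 \times 3$ Cramer determinants. I would cofactor-expand each and collect terms by the coefficients of $\T{G_1}$, $\T{G_1+v_1}$, and $\T{G_1-v_1}$ separately; each numerator will factor out a common $2z(z-1)$ (or $4z(z-1)$ in the case of $Q_3$), which cancels against the corresponding factor in the determinant and leaves precisely the denominators $2(2z^2-z-1)$ for $Q_1, Q_2$ and $2z^2-z-1$ for $Q_3$ stated in the lemma. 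The same strategy handles both loop statuses of $v_1$ uniformly: the specific linear systems differ in a few coefficients but each, after simplification, yields the same formulas for $Q_1, Q_2, Q_3$.
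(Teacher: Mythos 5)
Your proposal is correct and follows essentially the same route as the paper: substitute the three test graphs (a bare vertex $v_2$, and $v_2$ with an unlooped or looped leaf), use \autoref{prop:leaf_recursion} with the roles of $G_1$ and $G_2$ swapped to get the right-hand sides, and solve the resulting invertible $3\times3$ system (whose determinant is indeed $-4z(z-1)^2(2z+1)$). The only cosmetic differences are that the paper disposes of the looped-$v_1$ case by observing the system is the unlooped one with $Q_1$ and $Q_2$ permuted rather than re-solving, and that the join with a bare vertex is exactly $G_1$ (no $G_1+v_1$ alternative arises, since $v_2$ is required to match the loop status of $v_1$).
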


\begin{proof}
Suppose first that $v_1$ is unlooped. Setting $G_2$ to be the graphs consisting of just $v_2$, and of an unlooped/looped vertex connected to $v_2$, and using \autoref{prop:leaf_recursion} (with $G_1$ and $G_2$ swapped) together with $(*)$ gives a system of 3 equations that completely determines $Q_1,Q_2,Q_3$.
        \[
            \begin{bmatrix}
            2 & 2z & 1 \\
            2+2z^2 & 2z+2z^2 & 2 \\
            2z+2z^2 & 2z+2z^2 & 2z
        \end{bmatrix}\begin{bmatrix}
            Q_1 \\
            Q_2 \\
            Q_3
        \end{bmatrix}=\begin{bmatrix}
            \T{G_1} \\
            \T{G_1}+2z^2\T{G_1-v_1} \\
            z\T{G_1}+z\T{G_1+v_1}
        \end{bmatrix}\tag{$**$}
        \]
        Solving gives the desired expressions for $Q_1,Q_2,Q_3$.
        
    If $v_1$ is looped, then we get the system of equations
    \[
    \begin{bmatrix}
        2z & 2 & 1 \\
        2z+2z^2 & z+2z^2 & 2 \\
        2z+2z^2 & 2z+2z^2 & 2z
    \end{bmatrix}\begin{bmatrix}
        Q_1 \\
        Q_2 \\
        Q_3
    \end{bmatrix}=\begin{bmatrix}
        \T{G_1} \\
        \T{G_1}+2z^2\T{G_1-v_1} \\
        z\T{G_1}+z\T{G_1+v_1}
    \end{bmatrix}.
    \]
    Up to a permutation swapping $Q_1$ and $Q_2$, this is identical to $(**)$. Note however that swapping $G_1$ and $G_1+v_1$ in the expressions obtained for $Q_1,Q_2,Q_3$ in the case of unlooped $v_1$ has the effect of swapping $Q_1$ and $Q_2$ while leaving $Q_3$ unchanged. Therefore, the recursion formula for the unlooped case holds for the looped case as well.
\end{proof}

The proof of \autoref{thm:looped_recursion} is now immediate.
\begin{proof}
    By \autoref{lemma:recursion_existence}, there exist ${Q_1},{Q_2},{Q_3}\in\mathbb{Z}[z]$ depending on $G_1$ and $v_1$ such that
    \[
    \T{G_1\vee G_2}={Q_1}\T{G_2}+{Q_2}\T{G_2+v_2}+{Q_3}\T{G_2-v_2}.
    \]
    \autoref{lemma:recursion_uniqueness} then gives the explicit forms for $Q_1,Q_2,Q_3$, which yields the desired result upon simplifying.

\end{proof}

\begin{cor}\label{cor:-1/2}
    For any looped simple graph $G$ and any vertex $v\in V(G)$,
    \[\T{G}\left(-\frac{1}{2}\right)+\T{G+v}\left(-\frac{1}{2}\right)-\T{G-v}\left(-\frac{1}{2}\right)=0.\]
\end{cor}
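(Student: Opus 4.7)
The plan is to specialize Theorem~\ref{thm:looped_recursion} at $z = -1/2$ and exploit two simultaneous degeneracies: the scalar factor on the left vanishes, and the $3 \times 3$ matrix on the right drops to rank one.

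First I observe that $4z^2 - 2z - 2 = 2(2z+1)(z-1)$, so the left-hand side of Theorem~\ref{thm:looped_recursion} is identically zero at $z = -1/2$. Next I evaluate the central matrix at $z = -1/2$, obtaining
\[
\begin{pmatrix} -1/2 & -1/2 & 1/2 \\ -1/2 & -1/2 & 1/2 \\ 1/2 & 1/2 & -1/2 \end{pmatrix}
= -\tfrac{1}{2}\begin{pmatrix} 1 \\ 1 \\ -1 \end{pmatrix}\begin{pmatrix} 1 & 1 & -1 \end{pmatrix},
\]
which is a rank-one outer product. Writing $P(G,v) := \T{G}(-1/2) + \T{G+v}(-1/2) - \T{G-v}(-1/2)$, Theorem~\ref{thm:looped_recursion} evaluated at $z = -1/2$ therefore reduces to
\[
0 = -\tfrac{1}{2}\,P(G_1, v_1)\,P(G_2, v_2)
\]
for every one-point join $(G_1, v_1) \vee (G_2, v_2)$ of looped simple graphs.

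To conclude, given any looped simple graph $G$ with a vertex $v$, I take two disjoint copies of $(G, v)$ and form their one-point join; this is legal since the two copies of $v$ trivially have matching loop status. The displayed identity then yields $P(G, v)^2 = 0$, and hence $P(G, v) = 0$, which is exactly the claim.

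There is no serious obstacle here: the only thing to verify is the rank-one factorization of the specialized matrix, which is a direct calculation. The argument is essentially a polarization identity read off the vanishing of the left-hand side at a distinguished root of $4z^2 - 2z - 2$.
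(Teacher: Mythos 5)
Your proof is correct, but it takes a genuinely different route from the paper's. The paper works one level below \autoref{thm:looped_recursion}: by \autoref{lemma:recursion_existence} the coefficients $Q_1,Q_2,Q_3$ are honest polynomials, and \autoref{lemma:recursion_uniqueness} writes them as rational expressions with denominator $2(2z^2-z-1)=2(z-1)(2z+1)$; polynomiality forces each numerator to vanish at $z=-1/2$, and the numerator of $Q_1$ (or $Q_2$) is, up to a factor of $-\tfrac12$, exactly the quantity $\T{G}+\T{G+v}-\T{G-v}$ evaluated there. You instead use only the final statement of \autoref{thm:looped_recursion}: the scalar $4z^2-2z-2$ kills the left side at $z=-1/2$, the matrix degenerates to the rank-one outer product $-\tfrac12\,(1,1,-1)^T(1,1,-1)$ (your factorization checks out), so $P(G_1,v_1)P(G_2,v_2)=0$ for every one-point join, and joining $G$ with a disjoint copy of itself gives $P(G,v)^2=0$, hence $P(G,v)=0$ over $\Q$. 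Both arguments are sound; the paper's is shorter and explains the vanishing as divisibility of the $Q_i$-numerators by $2z+1$, while yours is self-contained given only the published theorem and exposes the structural reason for the identity, namely that the quadratic form in \autoref{thm:looped_recursion} becomes a perfect square (a polarization of the linear form $P$) at the root $z=-1/2$. The only hypotheses you rely on implicitly are that the twist polynomials take values in an integral domain (so $P^2=0$ implies $P=0$) and that the self-join $(G,v)\vee(G,v)$ is admissible, both of which are immediate.
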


\begin{proof}
    Replacing $G_1$ by $G$ in \autoref{lemma:recursion_existence} and \autoref{lemma:recursion_uniqueness}, we have that $Q_1',Q_2',Q_3'$  are all polynomials, so their numerators must each have $-1/2$ as a root since $2z^2-z-1=(z-1)(2z+1)$.
\end{proof}
As a remark, note that divisibility by $z-1$ in the preceding proof is a consequence of the fact that $\T{G}(1)=2^{|V(G)|}$ for all looped simple graphs $G$. It would be interesting to find an interpretation of the twist polynomial evaluated at $-1/2$ so that the result in \autoref{cor:-1/2} can be better understood.

\begin{cor}\label{cor:loop_complementation_of_unlooped_graph}
    If $G$ is an unlooped simple graph and $v$ is any vertex of $G$, then
    \[
    \T{G+v}=\frac{z\T{G}+2z^2\T{G-v}}{z+1}
    \]
\end{cor}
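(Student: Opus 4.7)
The plan is to apply Theorem \ref{thm:unlooped_recursion} to a one-point join $(G_1,v_1)\vee(G,v)$ for a carefully chosen small graph $G_1$, then match the resulting expression against Proposition \ref{prop:leaf_recursion}(b) to isolate $\T{G+v}$. Specifically, I would take $G_1$ to be the two-vertex graph consisting of a \emph{looped} vertex $w$ joined by an edge to an \emph{unlooped} vertex $v_1$. Since $v$ is unlooped (as $G$ is unlooped), the loop-status requirement on the join is satisfied, and since $G-v$ is also unlooped, Theorem \ref{thm:unlooped_recursion} applies.

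First, Proposition \ref{prop:leaf_recursion}(b) with $G_2=G$, $v_2=v$ gives immediately
\[
\T{G_1\vee G}=z\T{G}+z\T{G+v}.
\]
Second, I would compute the small twist polynomials $\T{G_1}$ and $\T{G_1-v_1}$ directly from Proposition \ref{prop:twist_polynomial_on_binary_delta_matroids} applied to the adjacency matrices $\begin{bmatrix}1&1\\1&0\end{bmatrix}$ and $[1]$; this yields $\T{G_1}=2z^2+2z$ and $\T{G_1-v_1}=2z$. Substituting into Theorem \ref{thm:unlooped_recursion} then expresses $\T{G_1\vee G}$ as a rational function in $z$ with $\T{G}$ and $\T{G-v}$ as coefficients, with denominator $2z^2-2=2(z-1)(z+1)$.

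Finally, I would equate the two expressions for $\T{G_1\vee G}$ and solve for $\T{G+v}$. The main step of the calculation is verifying that the numerator factors as $(z-1)$ times something, so that one power of $(z-1)$ cancels with the denominator, leaving the clean denominator $z+1$ predicted by the statement. There is no real conceptual obstacle here once the choice of $G_1$ is made; the derivation is essentially a short polynomial manipulation, entirely parallel in spirit to the proof of Corollary \ref{cor:-1/2}. The content of the corollary is that although Theorem \ref{thm:unlooped_recursion} itself requires an unlooped vertex on one side, when $G$ is unlooped the loop-complementation $\T{G+v}$ is already determined by $\T{G}$ and $\T{G-v}$ alone.
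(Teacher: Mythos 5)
Your computation is internally consistent (the values $\T{G_1}=2z^2+2z$ and $\T{G_1-v_1}=2z$ are correct, and equating your two expressions for $\T{G_1\vee G}$ does yield the stated identity after the $(z-1)$ cancellation), but the argument is circular as a proof within this paper: \autoref{thm:unlooped_recursion}, which you invoke as a black box, is itself proved \emph{using} \autoref{cor:loop_complementation_of_unlooped_graph}. Concretely, \autoref{lemma:recursion_uniqueness} produces coefficients $Q_1,Q_3$ with denominator $2(2z^2-z-1)$ and with $\T{G_1+v_1}$ appearing in the numerators; the passage to the clean form of \autoref{thm:unlooped_recursion}, with denominator $2z^2-2$ and no $\T{G_1+v_1}$ term, is exactly the substitution $(z+1)\T{G_1+v_1}=z\T{G_1}+2z^2\T{G_1-v_1}$, i.e., the corollary you are trying to prove. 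So deducing the corollary from \autoref{thm:unlooped_recursion} begs the question. (One can also check that \autoref{thm:looped_recursion}, which does not depend on the corollary, gives no information here: substituting your $G_1$, or indeed any of the small test graphs from \autoref{prop:leaf_recursion}, into it only reproduces identities already known.)

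The paper's proof is shorter and avoids this. Since $G-v$ is unlooped, \autoref{lemma:recursion_existence} gives $\delta_X=0$ for every $X$, so the recursion for $(G,v)\vee(G_2,v_2)$ contains no $\T{G_2+v_2}$ term, i.e., $Q_2=0$; on the other hand \autoref{lemma:recursion_uniqueness} forces $Q_2=\frac{2z^2\T{G-v}-(z+1)\T{G+v}+z\T{G}}{2(2z^2-z-1)}$, and setting this expression to zero is precisely the claimed identity. Your approach could be repaired by first establishing \autoref{thm:unlooped_recursion} independently of the corollary (for instance, using $Q_2=0$ together with a $2\times 2$ version of the uniqueness argument that solves for $Q_1,Q_3$ from only two test graphs $G_2$), but as written the logical order is inverted.
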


\begin{proof}
    By taking the one-point join of $G$ with any graph at $v$, we see from \autoref{lemma:recursion_existence} and \autoref{lemma:recursion_uniqueness} that $Q_2=0$ and 
    \[
    Q_2=\frac{2z^2\T{G-v}-(z+1)\T{G+v}+z\T{G}}{2(2z^2-z-1)}.
    \]
\end{proof}

We now prove \autoref{thm:unlooped_recursion}.
\begin{proof}
    Since $G_1$ and $G_2$ can be interchanged in the formula to be proved, we may assume $G_1-v_1$ is unlooped without loss of generality. By \autoref{lemma:recursion_existence}, \autoref{lemma:recursion_uniqueness}, and \autoref{cor:loop_complementation_of_unlooped_graph}, we get that $Q_2=0$,
    \[
    Q_1=\frac{2z^2\T{G_1-v_1}-(z+1)\T{G_1}+z\T{G_1+v_1}}{2(2z^2-z-1)}=\frac{2z^2\T{G_1-v_1}-\T{G_1}}{2z^2-2},
    \]
    and
    \[
    Q_3=\frac{z^2(\T{G_1}+\T{G_1+v_1})-2(z^3+z^2)\T{G_1-v_1}}{2z^2-z-1}=\frac{2z^2\T{G_1}-4z^2\T{G_1-v_1}}{2z^2-2}.
    \]
    Substituting into $\T{G_1\vee G_2}=Q_1\T{G_2}+Q_3\T{G_2-v_2}$ gives the result.    
\end{proof}

Next, we use \autoref{thm:unlooped_recursion} to compute the twist polynomial of the windmill graph. For $n\geq 2$ and $m\geq 1$, the windmill graph $K_n^{(m)}=K_n\vee \cdots\vee K_n$ is formed by taking the disjoint union of $m$ copies of the complete graph $K_n$ on $n$ vertices and identifying one vertex from each copy into one common vertex. The twist polynomial for the case $m=1$ was studied by Yan and Jin in \cite{Yan_Jin_2022, Yan_Jin_2021} as a counterexample to a conjecture made by Gross, Mansour, and Tucker in \cite{Gross_Mansour_Tucker_2020}.
\begin{prop}\label{prop:complete_graph}
    (Theorem 23 in \cite{Yan_Jin_2021}). For all integers $n\geq 1$,
    \[
    \T{K_n}(z)=\begin{cases}
        2^{n-1}z^n+2^{n-1}z^{n-2} &\text{if }n\text{ is even} \\
        2^nz^{n-1} &\text{if }n\text{ is odd.}
    \end{cases}
    \]
\end{prop}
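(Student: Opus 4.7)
The plan is to use the rank formula from \autoref{prop:twist_polynomial_on_binary_delta_matroids}, which reduces the computation of $\T{K_n}(z)$ to counting the ranks of principal submatrices of the adjacency matrix of $K_n$ over $\F_2$. The adjacency matrix of $K_n$ is $M = J_n - I_n$, which over $\F_2$ becomes $M = J_n + I_n$, where $J_n$ is the all-ones matrix. Every principal submatrix $M[A]$ with $|A| = k$ is exactly $J_k + I_k$, so the rank depends only on the size $k = |A|$.

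First I would compute $\rank(J_k + I_k)$ over $\F_2$ as a function of $k$. A direct computation gives $(J_k + I_k)^2 = J_k^2 + I_k = kJ_k + I_k$ over $\F_2$. When $k$ is even, this equals $I_k$, so $J_k + I_k$ is its own inverse and hence has rank $k$. When $k$ is odd, each row of $J_k + I_k$ has $k-1$ ones (an even number), so the sum of all rows vanishes and the rank is at most $k-1$; on the other hand, the $(k-1) \times (k-1)$ principal submatrix is $J_{k-1} + I_{k-1}$, which is invertible by the even case, so the rank is exactly $k - 1$. Define $f(k) = k$ if $k$ is even and $f(k) = k - 1$ if $k$ is odd (with $f(0) = 0$).

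By \autoref{prop:twist_polynomial_on_binary_delta_matroids}, grouping subsets $A \subseteq V(K_n)$ by cardinality yields
\[
\T{K_n}(z) = \sum_{k=0}^{n} \binom{n}{k} z^{f(k) + f(n-k)}.
\]
When $n$ is even, $k$ and $n-k$ have the same parity: if both even, the exponent is $n$; if both odd, the exponent is $n-2$. Splitting the sum by parity of $k$ and using $\sum_{k \text{ even}} \binom{n}{k} = \sum_{k \text{ odd}} \binom{n}{k} = 2^{n-1}$ gives $\T{K_n}(z) = 2^{n-1} z^n + 2^{n-1} z^{n-2}$. When $n$ is odd, $k$ and $n-k$ always have opposite parity, so $f(k) + f(n-k) = n - 1$ for every $k$, and the sum collapses to $2^n z^{n-1}$.

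The only non-routine step is the rank computation for $J_k + I_k$, which is the heart of the argument; everything else is bookkeeping with binomial coefficients. I expect no other genuine obstacles, since the rank formula of \autoref{prop:twist_polynomial_on_binary_delta_matroids} provides a clean starting point and the high symmetry of $K_n$ makes all principal submatrices of a given order identical.
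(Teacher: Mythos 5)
Your argument is correct, but it cannot be compared against a proof in the paper because the paper gives none: \autoref{prop:complete_graph} is cited verbatim as Theorem 23 of Yan and Jin's earlier work, where it was established by analyzing bouquets whose signed intersection graph is $K_n$. What you have produced is a self-contained rederivation using the paper's own rank formula, \autoref{prop:twist_polynomial_on_binary_delta_matroids}, and every step checks out. The key computation is sound: over $\F_2$ the adjacency matrix of $K_n$ is $J_n+I_n$, every principal submatrix of order $k$ is $J_k+I_k$, the identity $(J_k+I_k)^2=kJ_k+I_k$ shows invertibility for $k$ even, and for $k$ odd the row-sum dependence together with the invertible $(k-1)\times(k-1)$ principal submatrix pins the rank at $k-1$ (this also handles $k=1$, where $J_1+I_1=[0]$). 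The parity bookkeeping with $f(k)+f(n-k)$ and the identity $\sum_{k\text{ even}}\binom{n}{k}=\sum_{k\text{ odd}}\binom{n}{k}=2^{n-1}$ for $n\geq 1$ then gives exactly the claimed formulas, and spot checks at $n=1,2,3$ confirm them. Your approach buys something the citation does not: it stays entirely within the matrix-rank framework the paper develops in Section 3, making the windmill-graph computation in \autoref{prop:example} self-contained rather than dependent on an external topological computation, and it exposes why only the exponents $n$ and $n-2$ (or only $n-1$) can occur, namely the two-valued parity behavior of $\rank(J_k+I_k)$.
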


\begin{prop}{\label{prop:example}}
Let $n\geq 2$ and $m\geq 1$ be integers. Then
\[
    \T{K_n^{(m)}}(z)=\begin{cases}
        2^{m(n-2)+1}z^{m(n-2)}[(2^m-1)z^2+1] &\text{if }n\text{ is even} \\
        2^{m(n-2)+1}z^{m(n-3)+2}[(z^2+1)^m-z^{2m}+z^{2m-2}] &\text{if }n\text{ is odd.}
    \end{cases}
\]
\end{prop}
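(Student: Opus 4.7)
The plan is to induct on $m$, applying \autoref{thm:unlooped_recursion} at each step to relate $\T{K_n^{(m)}}$ to $\T{K_n^{(m-1)}}$. Writing $K_n^{(m)} = K_n^{(m-1)} \vee K_n$, where the join is taken at the central vertex $v$, we have that $K_n - v = K_{n-1}$ is unlooped, so \autoref{thm:unlooped_recursion} applies. Removing the central vertex of $K_n^{(m-1)}$ leaves the disjoint union of $m-1$ copies of $K_{n-1}$, so by \autoref{prop:twist_polynomial_of_direct_sum}, $\T{K_n^{(m-1)} - v} = \T{K_{n-1}}^{m-1}$.

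Setting $a_m = \T{K_n^{(m)}}$, $b = \T{K_n}$, and $c = \T{K_{n-1}}$, \autoref{thm:unlooped_recursion} yields the first-order linear recurrence
\[
(2z^2-2)\,a_m = (2z^2 c - b)\,a_{m-1} + 2z^2(b-2c)\,c^{m-1},
\]
with base case $a_1 = b$ supplied by \autoref{prop:complete_graph}. I would solve this directly: the homogeneous solution has the form $C\alpha^m$ with $\alpha = (2z^2 c - b)/(2z^2-2)$, a particular solution of the form $K c^m$ can be found by ansatz, and the constant $C$ is pinned down by the base case.

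The computation splits by the parity of $n$, since \autoref{prop:complete_graph} gives different forms for $\T{K_n}$. When $n$ is even, a short calculation gives $b = (z^2+1)c$, which reduces the coefficients to $\alpha = c/2$ and $K = 2z^2$; substituting $c = 2^{n-1}z^{n-2}$ then produces the formula $2^{m(n-2)+1}z^{m(n-2)}[(2^m-1)z^2+1]$. When $n$ is odd, $c = 2^{n-2}z^{n-3}(z^2+1)$ and $b = 2^n z^{n-1}$; solving the recurrence yields an expression of the form $-2(z^2-1)\alpha^m + 2z^2 c^m$, and factoring out a $z^2$ recombines the two summands into $z^2[(z^2+1)^m - z^{2m} + z^{2m-2}]$ together with the correct external constants.

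The main obstacle is the algebraic bookkeeping, especially in the odd case, where the closed-form expression requires a nontrivial rearrangement into the stated compact form involving $(z^2+1)^m$. There is no real conceptual difficulty beyond careful verification against the base case.
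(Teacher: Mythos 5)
Your proposal is correct and follows essentially the same route as the paper: decompose $K_n^{(m)}=K_n^{(m-1)}\vee K_n$, apply \autoref{thm:unlooped_recursion} together with \autoref{prop:twist_polynomial_of_direct_sum} to get the same first-order linear recurrence, then split by the parity of $n$ using \autoref{prop:complete_graph} and solve. The only difference is cosmetic (homogeneous-plus-particular ansatz versus the paper's direct simplification and iteration), and your intermediate quantities ($\alpha=c/2$, $K=2z^2$, and the odd-case form $-2(z^2-1)\alpha^m+2z^2c^m$) all check out against the stated closed forms.
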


\begin{proof}
For ease of notation, let $W_{a,b}=\T{K_a^{(b)}}(z)$ for $a\geq2$, $b\geq1$. Applying \autoref{thm:unlooped_recursion} to $K_n^{(m)}=K_n^{(m-1)}\vee K_n$ for $m\geq 2$ and using \autoref{prop:twist_polynomial_of_direct_sum} gives
\begin{align*}
    (2z^2-2)W_{n,m}&=2z^2 W_{n,m-1}W_{n-1,1}+2z^2 W_{n-1,1}^{m-1} W_{n,1}-W_{n,m-1}W_{n,1}-4z^2W_{n-1,1}^m \\
    &=(2z^2 W_{n-1,1}-W_{n,1})W_{n,m-1}+2z^2(W_{n,1}-2W_{n-1,1})W_{n-1,1}^{m-1}.
\end{align*}

\emph{Case 1}. Suppose $n$ is even. Then by \autoref{prop:complete_graph},
\begin{align*}
    (2z^2-2)W_{n,m}&=2^{n-1}z^{n-2}(z^2-1)W_{n,m-1}+2^{n}z^n(z^2-1)W_{n-1,1}^{m-1} \\
    W_{n,m}&=2^{n-2}z^{n-2}(W_{n,m-1}+2z^2(2^{n-1}z^{n-2})^{m-1}) \\
    W_{n,m}&=2^{n-2}z^{n-2}W_{n,m-1}+2^{m(n-1)}z^{m(n-2)+2}. \tag{$*$}
\end{align*}
Solving the recurrence relation given by $(*)$ yields 
\[
W_{n,m}=2^{m(n-2)+1}z^{m(n-2)}[(2^m-1)z^2+1].
\]

\emph{Case 2}. Suppose now that $n$ is odd. Then
\begin{align*}
    (2z^2-2)W_{n,m}&=(2^{n-1}z^{n+1}-2^{n-1}z^{n-1})W_{n,m-1}+(2^{n}z^{n+1}-2^nz^{n-1})W_{n-1,1}^{m-1} \\
    W_{n,m}&=2^{n-2}z^{n-1}W_{n,m-1}+2^{n-1}z^{n-1}(2^{n-2}z^{n-1}+2^{n-2}z^{n-3})^{m-1}. \tag{$**$}    
\end{align*}
Solving $(**)$ then gives
\[
W_{n,m}=2^{m(n-2)+1}z^{m(n-3)+2}[(z^2+1)^m-z^{2m}+z^{2m-2}].
\]
\end{proof}

Observe that in the $m=2$ case, the two formulas in \autoref{prop:example} for even and odd $n$ are identical.

\begin{cor}
    For all integers $n\geq2$,
    \[
    \T{K_n\vee K_n}(z)=2^{2n-3}z^{2n-4}(3z^2+1).
    \]
\end{cor}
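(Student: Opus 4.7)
The plan is to derive this corollary directly from \autoref{prop:example} by substituting $m = 2$ and checking that the two parity cases collapse to the same expression (as noted in the remark immediately preceding the corollary). No new machinery is needed; this is just an evaluation.

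First I would treat the even case: setting $m = 2$ in the even-$n$ branch of \autoref{prop:example} gives
\[
\T{K_n^{(2)}}(z) = 2^{2(n-2)+1} z^{2(n-2)}\bigl[(2^2 - 1)z^2 + 1\bigr] = 2^{2n-3} z^{2n-4}(3z^2 + 1),
\]
which is exactly the claimed formula. Next I would handle the odd case by substituting $m = 2$ into the odd-$n$ branch: the prefactor becomes $2^{2(n-2)+1} z^{2(n-3)+2} = 2^{2n-3} z^{2n-4}$, and the bracketed factor simplifies as
\[
(z^2 + 1)^2 - z^{4} + z^{2} = z^4 + 2z^2 + 1 - z^4 + z^2 = 3z^2 + 1.
\]
Hence the odd-$n$ formula also reduces to $2^{2n-3} z^{2n-4}(3z^2+1)$, matching the even case.

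There is no real obstacle here: the only minor check is the polynomial simplification $(z^2+1)^2 - z^4 + z^2 = 3z^2 + 1$ in the odd case, which is immediate. One could alternatively prove the corollary from scratch by applying \autoref{thm:unlooped_recursion} to $K_n \vee K_n$ together with \autoref{prop:complete_graph} and \autoref{prop:twist_polynomial_of_direct_sum}, but this merely recovers the $m=2$ instance of the recurrences $(*)$ and $(**)$ already solved in the proof of \autoref{prop:example}, so invoking that proposition directly is cleaner.
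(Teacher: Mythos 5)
Your proof is correct and is exactly the paper's intended derivation: the corollary is stated immediately after the remark that the even and odd cases of the windmill formula coincide at $m=2$, and your substitution and the check $(z^2+1)^2 - z^4 + z^2 = 3z^2+1$ are precisely what that remark relies on.
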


\section{The Delta-Matroid Associated to a One-Point Join of Graphs}\label{sec:The Delta-Matroid Associated to a One-Point Join of Graphs}
In this section, we give a description of the binary delta-matroid associated to the one-point join of two looped simple graphs purely in terms of feasible sets. We obtain this description from \autoref{lemma:det_identity}, which computes the determinant of a block matrix with two blocks that intersect at one entry. \autoref{lemma:det_identity} specializes to Corollary 2b in \cite{Schwenk} when $\det M$ is the characteristic polynomial of an unlooped graph. We present two different proofs of \autoref{lemma:det_identity}: the first is similar in nature to the proof in \cite{Schwenk} and uses a graph-theoretical formulation of the determinant observed by Harary, and the second uses the Schur determinant identity and the Zariski topology of affine space.

\begin{lemma}\label{lemma:linear_subgraphs}
    (Harary \cite{Harary}). Let $M$ be a square matrix over a field $\F$, viewed as the adjacency matrix of a looped digraph $D$ with edge weights belonging to $\F$. Denote by $\mathcal{L}(D)$ the family of \textit{linear subgraphs} of $D$, i.e. subgraphs consisting of directed cycles such that every vertex of $D$ is contained in exactly one such cycle (equivalently, subgraphs in which every vertex of $D$ has indegree and outdegree 1). For $G\in\mathcal{L}(D)$, let $n(G)$ be the number of even-length cycles in $G$, and let $p_G$ be the product of the edge weights present in $G$. Then
    \[
    \det M=\sum_{G\in\mathcal{L}(D)}(-1)^{n(G)}p_G.
    \]
\end{lemma}

\begin{lemma}\label{lemma:det_identity}
    Let $\F$ be a field and let $n,m\in\N$. Suppose $M$ is a matrix over $\F$ of the form
    \[
    M=\begin{bmatrix}
        A' & u & 0 \\
        v^T & c & x^T \\
        0 & y & B'
    \end{bmatrix},
    \]
    where $A'$ is an $n\times n$ matrix, $B'$ is an $m\times m$ matrix, $u$ and $v$ are $n$-dimensional column vectors, $x$ and $y$ are $m$-dimensional column vectors, and $c\in\F$. Let
    \[
    A=\begin{bmatrix}
        A' & u \\
        v^T & c
    \end{bmatrix}\;\;\;\;\text{and}\;\;\;\;B=\begin{bmatrix}
        c & x^T \\
        y & B'
    \end{bmatrix}.
    \]
    Then
    \[
    \det M = \det A\det B'+\det A'\det B-c\det A'\det B'.
    \]
\end{lemma}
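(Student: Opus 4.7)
The plan is to prove the identity by reducing to the case where $A'$ is invertible and then applying the Schur determinant formula twice.

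First, observe that both sides of $(*)$ are polynomials in the entries of $A', B', u, v, x, y$ and in $c$. It therefore suffices to prove the identity as a polynomial identity over $\Z$, and hence it suffices to verify it at all points in the Zariski-open subset where $A'$ is invertible (such a set is dense when $\F$ is infinite, and the result then specializes to any field $\F$). So I would assume without loss of generality that $A'$ is invertible.

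Under this assumption, apply the Schur determinant formula to the block decomposition of $M$ with upper-left block $A'$. The Schur complement of $A'$ in $M$ is
\[
S = \begin{pmatrix} c & x^T \\ y & B' \end{pmatrix} - \begin{pmatrix} v^T \\ 0 \end{pmatrix}(A')^{-1}\begin{pmatrix} u & 0 \end{pmatrix} = \begin{pmatrix} c - v^T(A')^{-1}u & x^T \\ y & B' \end{pmatrix},
\]
so $\det M = \det(A')\det(S)$. Note that $S$ agrees with $B$ except in the $(1,1)$-entry, which has been shifted by $-v^T(A')^{-1}u$. By multilinearity of the determinant in the first row (the $(1,1)$-cofactor of $B$ is $\det B'$), this yields
\[
\det S = \det B - v^T(A')^{-1}u\cdot\det B'.
\]

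Next, apply Schur's formula once more to the block matrix $A$ itself, again using that $A'$ is invertible: $\det A = \det(A')\bigl(c - v^T(A')^{-1}u\bigr)$, which rearranges to
\[
v^T(A')^{-1}u\cdot\det A' = c\det A' - \det A.
\]
Substituting into $\det M = \det(A')\det(S) = \det(A')\det B - v^T(A')^{-1}u\cdot\det(A')\det B'$ gives
\[
\det M = \det(A')\det B - (c\det A' - \det A)\det B' = \det A\det B' + \det A'\det B - c\det A'\det B',
\]
which is $(*)$.

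The main thing to be careful about is the reduction to the invertible case: one must recognize that $(*)$ is a polynomial identity in symbolic entries, so that the density of the invertibility locus is enough. Beyond that, the argument is simply two applications of the Schur formula plus multilinearity in a single row, so the computation itself is routine.
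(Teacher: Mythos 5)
Your proof is correct, and it follows the same basic strategy as the paper's (Schur complement plus reduction to the generic case), but with a genuine streamlining worth noting. The paper first proves $(*)$ under the assumption that \emph{both} $A'$ and $B'$ are invertible, applying the Schur identity a second time to the complement $S$ with respect to $B'$; it then needs a two-step Zariski-closure argument (first varying $B'$ for fixed invertible $A'$, then varying $A'$) to remove both invertibility hypotheses. You avoid the second Schur application entirely by observing that $S$ differs from $B$ only in the $(1,1)$-entry and invoking multilinearity of the determinant in the first row to get $\det S=\det B-v^T(A')^{-1}u\cdot\det B'$; this is valid for arbitrary $B'$, so you only ever need $A'$ to be invertible and hence only one genericity reduction. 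The density step is phrased a bit compactly --- strictly, verifying $(*)$ on the invertibility locus over an infinite field shows the difference of the two sides is the zero polynomial there, and since its coefficients lie in $\Z$ the identity then specializes to every field, including finite ones --- but that is exactly the standard argument and is sound. Both proofs are valid; yours is marginally more economical.
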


\begin{proof}
    By abuse of notation, we will use the same symbol to refer to an edge-weighted, looped digraph and to its adjacency matrix. The vertex $t$ of $M$ with loop weight $c$ is a cut vertex, so any cycle containing $t$ is contained in either $A$ or $B$. Let $\tau_A$ and $\tau_B$ be the collections of $G\in\mathcal{L}(M)$ such that the cycle in $G$ containing $t$ is contained in $A$ and $B$, respectively. Let $\tau_t$ be the collection of $G\in\mathcal{L}(M)$ such that the cycle in $G$ containing $t$ consists of just the loop at $t$. Then $\mathcal{L}(M)=\tau_A\cup\tau_B$ and $\tau_A\cap\tau _B=\tau_t$. Since $t$ is a cut vertex, we obtain the decompositions
    \begin{align*}
        \sum_{G\in\tau_t}(-1)^{n(G)}p_G&=c(-1)^{n(G-V(B))}p_{G-V(B)}(-1)^{n(G-V(A))}p_{G-V(A)} \\
        &=c\left(\sum_{G'\in\mathcal{L}(A')}(-1)^{n(G')}p_{G'}\right)\left(\sum_{G''\in\mathcal{L}(B')}(-1)^{n(G'')}p_{G''}\right) \\
        &=c\det A'\det B',
    \end{align*}
    and
    \begin{align*}
        \sum_{G\in\tau_A}(-1)^{n(G)}p_G&=(-1)^{n(G-V(B'))}p_{G-V(B')}(-1)^{n(G-V(A))}p_{G-V(A)} \\
        &=\left(\sum_{G'\in\mathcal{L}(A)}(-1)^{n(G')}p_{G'}\right)\left(\sum_{G''\in\mathcal{L}(B')}(-1)^{n(G'')}p_{G''}\right) \\
        &=\det A\det B'.
    \end{align*}
    Similarly, $\sum_{G\in\tau_B}(-1)^{n(G)}p_G=\det A'\det B$. By \autoref{lemma:linear_subgraphs},
    \begin{align*}
        \det M&=\sum_{G\in\mathcal{L}(M)}(-1)^{n(G)}p_G \\
        &=\sum_{G\in\tau_A}(-1)^{n(G)}p_G+\sum_{G\in\tau_B}(-1)^{n(G)}p_G-\sum_{G\in\tau_t}(-1)^{n(G)}p_G \\
        &=\det A\det B'+\det A'\det B-c\det A'\det B'.
    \end{align*}    
\end{proof}
The previous proof uses the one-point join condition to simplify the linear subgraphs of $M$. In the following alternative proof of \autoref{lemma:det_identity}, the one-point join condition emerges in the additivity of the determinant for $1\times1$ matrices.

\begin{proof}
    By replacing $\F$ with its algebraic closure, we may assume $\F$ is algebraically closed. We first show that the formula holds whenever $A'$ and $B'$ are invertible. Applying the Schur determinant identity repeatedly gives
    \[
    \det A=(\det A')(c-v^T(A')^{-1}u),
    \]
    \[
    \det B=(\det B')(c-x^T(B')^{-1}y),
    \]
    and
    \begin{align*}
        \det M&=\det A'\det\left(B-\begin{bmatrix}
        v^T \\
        0
    \end{bmatrix}(A')^{-1}\begin{bmatrix}
        u & 0
    \end{bmatrix}\right) \\
    &=\det A'\det\begin{bmatrix}
        c-v^T(A')^{-1}u & x^T \\
        y & B'
    \end{bmatrix} \\
    &=(\det A'\det B')(c-v^T(A')^{-1}u-x^T (B')^{-1}y) \\
    &=\det A\det B'+\det A'\det B-c\det A'\det B'.
    \end{align*}
    For every invertible $A'$, the collection $Z$ of $m\times m$ matrices $B'$ satisfying the above expression for $\det M$ forms a Zariski closed subset of the affine space $\mathbb{A}^{m^2}$ over $\F$, since the determinant is a polynomial in the matrix entries. Since $Z$ contains the Zariski open set of all invertible $m\times m$ matrices, it follows that $Z=\mathbb{A}^{m^2}$ since $\mathbb{A}^{m^2}$ is irreducible. Applying a similar argument for an arbitrary (possibly non-invertible) $B'$ and varying $A'$ gives the result.
\end{proof}

For any proposition $P$, let $(P)\in\F_2$ be the value 1 if $P$ is true, and 0 if $P$ is false. So, for any two propositions $P$ and $Q$, we have that $(P)+(Q)=(P\text{ XOR }Q)$ and $(P)(Q)=(P\text{ AND }Q)$.
\begin{definition}\label{def:set_system_one-point_join}
    Let $S_1=(E_1,\mathcal{F}_1)$ and $S_2=(E_2,\mathcal{F}_2)$ be two set systems with $E_1\cap E_2=\emptyset$ and with distinguished elements $e_1\in E_1$ and $e_2\in E_2$ such that either both $\{e_1\}\in\mathcal{F}_1$ and $\{e_2\}\in\mathcal{F}_2$, or both $\{e_1\}\not\in\mathcal{F}_1$ and $\{e_2\}\not\in\mathcal{F}_2$. Let $E$ be the quotient set $(E_1\sqcup E_2)/\sim$ under the identification $e_1\sim e_2$. We identify $E$ with the set $[(E_1\setminus e_1)\cup(E_2\setminus e_2)]\sqcup\{e\}$. For $F\subseteq E$ and $i=1,2$, let $F_i$ be the preimage of $F$ in $E_i$ under the identification, and let $F_i'$ be the preimage of $F$ in $E_i\setminus e_i$. Explicitly, $F_i'=F\cap E_i$, and
    \[
    F_i=\begin{cases}
        F_i' &\text{if }e\not\in F \\
        F_i'\cup\{e_i\} &\text{if }e\in F.
    \end{cases}
    \]
    Let $\mathcal{F}$ be the collection of $F\subseteq E$ satisfying
    \[
    \begin{cases}
        1=(F_1\in \mathcal{F}_1)(F_2\in\mathcal{F}_2) &\text{if }e\not\in F \\
        1=(F_1\in\mathcal{F}_1)(F_2'\in\mathcal{F}_2)+(F_1'\in\mathcal{F}_1)(F_2\in\mathcal{F}_2)+(\{e_1\}\in\mathcal{F}_1)(F_1'\in\mathcal{F}_1)(F_2'\in\mathcal{F}_2) &\text{if }e\in F.
    \end{cases}
    \]
    Define $(S_1,e_1)\vee (S_2,e_2)$ (or $S_1\vee S_2$ for short) to be the set system $(E,\mathcal{F})$.
\end{definition}

\begin{prop}
Suppose $(G_1,v_1)\vee (G_2,v_2)$ is a one-point join of two looped simple graphs joined at $v\in V(G_1\vee G_2)$. Then
\[
D((G_1,v_1)\vee (G_2,v_2))=(D(G_1),v_1)\vee (D(G_2),v_2).
\]
In particular, if $D_1$ and $D_2$ are normal, binary delta-matroids, then $D_1\vee D_2$ is a normal, binary delta-matroid.   
\end{prop}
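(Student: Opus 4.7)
The plan is to verify the claimed equality of set systems by showing that both have the same ground set and the same collection of feasible sets. The ground set agreement is immediate from the construction: both sides have ground set $[(V(G_1)\setminus v_1)\cup(V(G_2)\setminus v_2)]\sqcup\{v\}$, where $v$ is the image of $v_1$ (equivalently $v_2$) under the identification. For the feasible sets, let $M$ be the adjacency matrix of $G_1\vee G_2$ with rows and columns ordered so that the vertices of $V(G_1)\setminus v_1$ precede $v$, which precedes the vertices of $V(G_2)\setminus v_2$. Then $F$ is feasible in $D(G_1\vee G_2)$ iff $\det M[F]=1$ in $\F_2$.

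I would split into two cases based on whether $v\in F$. If $v\notin F$, then $M[F]$ is block-diagonal with blocks $M_1[F_1']$ and $M_2[F_2']$ (using the notation of Definition \ref{def:set_system_one-point_join}, noting that $F_i=F_i'$ in this case). So $\det M[F]=\det M_1[F_1]\cdot\det M_2[F_2]$, which equals $1$ iff both $F_1\in\mathcal{F}(D(G_1))$ and $F_2\in\mathcal{F}(D(G_2))$, exactly matching the first case of the defining condition. If $v\in F$, then $M[F]$ has the shape
\[
M[F]=\begin{pmatrix} A' & u & 0 \\ v^T & c & x^T \\ 0 & y & B' \end{pmatrix},
\]
with $A'=M_1[F_1']$, $B'=M_2[F_2']$, and $c=M_{vv}$. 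The bordered matrices formed by including the row/column of $v$ on each side are $M_1[F_1]$ and $M_2[F_2]$. Now I would invoke Lemma \ref{lemma:det_identity} to obtain
\[
\det M[F]=\det M_1[F_1]\det M_2[F_2']+\det M_1[F_1']\det M_2[F_2]-c\det M_1[F_1']\det M_2[F_2'].
\]
Translating each determinant to the indicator of membership in the corresponding feasible set collection and using $c=(\{v_1\}\in\mathcal{F}(D(G_1)))=(\{v_2\}\in\mathcal{F}(D(G_2)))$ (which follows from the hypothesis that $v_1,v_2$ have the same loop status) gives precisely the second case of the defining condition in Definition \ref{def:set_system_one-point_join}, since over $\F_2$ addition is XOR and the sign in front of $c$ is irrelevant.

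For the final sentence, I would observe that the paper has already established that normal binary delta-matroids are in bijection with looped simple graphs via $G\mapsto D(G)$. Given normal binary delta-matroids $D_1$ and $D_2$, choose looped simple graphs $G_1,G_2$ with $D(G_i)=D_i$, pick vertices $v_1,v_2$ of matching loop status (which is possible exactly when the hypothesis on $\{e_i\}\in\mathcal{F}_i$ in Definition \ref{def:set_system_one-point_join} holds), and apply the identity just proved to conclude $D_1\vee D_2=D(G_1\vee G_2)$, which is normal and binary by construction. The main obstacle is the case $v\in F$, where the lack of a block-diagonal structure requires exactly the determinant identity of Lemma \ref{lemma:det_identity}; everything else is bookkeeping with the indicator/AND/XOR interpretation of arithmetic in $\F_2$.
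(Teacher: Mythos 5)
Your proposal is correct and follows essentially the same route as the paper: the case split on whether $v\in F$, the reduction of the $v\in F$ case to \autoref{lemma:det_identity}, the translation of determinants over $\F_2$ into feasibility indicators, and the use of \autoref{lemma:Bouchet_repr} for the final claim all match the paper's argument. The extra details you supply (the block-diagonal factorization when $v\notin F$ and the observation that the sign of the $c$-term is immaterial in characteristic $2$) are correct elaborations of steps the paper leaves implicit.
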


\begin{proof}
Let $M$ be the adjacency matrix of $G_1\vee G_2$. A subset $F\subseteq V(G_1\vee G_2)$ is feasible in $D(G_1\vee G_2)$ if and only if $\det M[F]=1$. By \autoref{lemma:det_identity}, This is equivalent to
\[
\begin{cases}
    1=\det M[F_1]\det M[F_2] &\text{if }v\not\in F \\
    1=\det M[F_1]\det M[F_2']+\det M[F_1']\det M[F_2]+\det M[\{v\}]\det M[F_1']\det M[F_2'] &\text{if }v\in F,
\end{cases}
\]
which is evidently equivalent to $F$ being a feasible set of $D(G_1)\vee D(G_2)$. The last statement of the proposition follows from the fact that \autoref{lemma:Bouchet_repr} implies that every normal, binary delta-matroid is of the from $D(G)$ for some looped simple graph $G$. 
\end{proof}

\section{Leaf Recursion for Delta-Matroids}\label{sec:Leaf Recursion for Delta-Matroids}
We now prove an extension of \autoref{prop:Yan_Jin_recurrence} to the twist polynomial on delta-matroids. The next lemma gives us control on the type of feasible set that best approximates (in terms of symmetric difference distance) an arbitrary subset of the ground set of a delta-matroid.
\begin{lemma}\label{lemma:symmetric_difference_distance}
Let $D=(E,\mathcal{F})$ be a delta-matroid and let $X\subseteq E$. If $Y\subseteq X$ and $Z\subseteq X^c$ and there exists $G\in\mathcal{F}$ such that $Y\subseteq G$ and $Z\subseteq G^c$, then there exists $F\in\mathcal{F}$ such that $Y\subseteq F$ and $Z\subseteq F^c$ and $|X\triangle F|=\min\{|X\triangle F'|:F'\in\mathcal{F}\}$.
\end{lemma}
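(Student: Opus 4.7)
The plan is to start with any minimum-distance feasible set $F^*$ and iteratively massage it into the desired $F$ using the Symmetric Exchange Axiom with the hypothesized $G$ as a guide. Let me define the violation count
\[
\nu(F) = |Y \setminus F| + |Z \cap F|,
\]
which measures how far $F$ is from satisfying the containment constraints. The goal reduces to showing that if $F^*$ is any minimum-distance feasible set with $\nu(F^*) > 0$, then there is another minimum-distance feasible set $F^{**}$ with $\nu(F^{**}) < \nu(F^*)$; iterating yields the desired $F$ after finitely many steps.

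For the single-step improvement, I would split into two symmetric cases depending on whether the witnessed violation lies in $Y$ or $Z$. Suppose first that there is some $y \in Y \setminus F^*$. Since $Y \subseteq G$, we have $y \in G \setminus F^* \subseteq G \triangle F^*$, so SEA applied to $F^*$ and $G$ at $u = y$ yields some $v \in G \triangle F^*$ with $F^{**} := F^* \triangle \{y, v\} \in \mathcal{F}$. I would then compute $|X \triangle F^{**}| - |X \triangle F^*|$ by a small case analysis: adding $y$ contributes $-1$ (since $y \in Y \subseteq X$ and $y \notin F^*$), while the contribution from $v$ is $\pm 1$ determined by whether $v \in F^*$ and whether $v \in X$. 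The case $v = y$ and the two cases where $v$'s contribution is also $-1$ (namely $v \in (G \setminus F^*) \cap X$ and $v \in (F^* \setminus G) \cap X^c$) all strictly decrease the distance, contradicting the minimality of $F^*$ and therefore being excluded. The only two surviving possibilities are $v \in (G \setminus F^*) \cap X^c$ and $v \in (F^* \setminus G) \cap X$, in each of which the distance is preserved, so $F^{**}$ is still minimum-distance. Moreover, using $Y \subseteq G \cap X$ and $Z \subseteq G^c \cap X^c$, one checks that $v \notin Y \cup Z$ in both surviving cases, so the exchange introduces no new violation while removing the one at $y$; hence $\nu(F^{**}) = \nu(F^*) - 1$.

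The symmetric case, when there is some $z \in Z \cap F^* \subseteq F^* \setminus G$, proceeds identically with the roles of ``add'' and ``remove'' reversed: SEA at $u = z$ produces some $v \in F^* \triangle G$, and the same arithmetic rules out $v = z$ together with the two configurations that would contradict minimality, leaving exactly the configurations in which $v \notin Y \cup Z$ and $|X \triangle F^{**}| = |X \triangle F^*|$. Together the two cases show that $\nu$ can always be strictly decreased while staying on the minimum-distance shell. Since $\nu(F^*) \in \mathbb{Z}_{\geq 0}$, the process terminates at an $F \in \mathcal{F}$ satisfying $Y \subseteq F$, $Z \subseteq F^c$, and $|X \triangle F| = \min\{|X \triangle F'| : F' \in \mathcal{F}\}$, as required.

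The only real obstacle is the bookkeeping in the case analysis: one must confirm that the distance-reducing configurations of $v$ really are precluded by the assumed minimality of $F^*$ (so that SEA is forced to hand us a ``useful'' $v$), and that in the surviving configurations the element $v$ indeed avoids both $Y$ and $Z$. Both checks follow directly from the hypotheses $Y \subseteq X \cap G$ and $Z \subseteq X^c \cap G^c$, so once the sign accounting for $|X \triangle F^{**}| - |X \triangle F^*|$ is set up carefully, the argument is routine.
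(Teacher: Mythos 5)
Your proof is correct and follows essentially the same strategy as the paper's: repeatedly apply the Symmetric Exchange Axiom against the witness $G$ and use minimality of $|X\triangle\,\cdot\,|$ to rule out any exchange partner that would strictly decrease the distance. The only difference is organizational --- the paper inducts on $|Y\cup Z|$, fixing one constraint per step, and gets the ``no new violations'' claim immediately from $Y\setminus e$ and $Z$ being disjoint from $G\triangle H$, whereas you descend on the violation count $|Y\setminus F|+|Z\cap F|$ from an arbitrary minimizer and need the four-way case analysis on $v$ to see that the surviving exchange partners avoid $Y\cup Z$; both verifications rest on the same hypotheses $Y\subseteq X\cap G$ and $Z\subseteq X^c\cap G^c$.
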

\begin{proof}
    We proceed by induction on $|Y\cup Z|$. The base case $|Y\cup Z|=0$ is trivial, so suppose $|Y\cup Z|>0$ and let $e\in Y\cup Z$. Suppose $e\in Y$ (the case $e\in Z$ is similar). Then $Y\setminus e\subseteq Y\subseteq X,G$ and $Z\subseteq X^c, G^c$ so by induction, there exists $H\in\mathcal{F}$ with $Y\setminus e\subseteq H$ and $Z\subseteq H^c$ and $|X\triangle H|=\min\{|X\triangle F'|:F'\in\mathcal{F}\}$. If $e\in H$ then we're done by setting $F\coloneq H$. Otherwise, $e\in G\triangle H$ so there is $u\in G\triangle H$ such that $F\coloneq H\triangle\{e,u\}\in\mathcal{F}$. If $u=e$ then minimality of $|X\triangle H|$ is contradicted. So, $u\neq e$ and
\[
|X\triangle F| = |X\triangle H\triangle \{e\}\triangle \{u\}| = |X\triangle H\triangle \{e\}|\pm1 = |X\triangle H|-1\pm1 \leq|X\triangle H|
\]
    so $|X\triangle F|=\min\{|X\triangle F'|:F'\in\mathcal{F}\}$. Moreover, $Y\subseteq F$ and $Z\subseteq F^c$, since $(Y\setminus e)\cap (G\triangle H)=\emptyset$ and $Z\cap(G\triangle H)=\emptyset$.
\end{proof}
When $Y=\emptyset$ and $Z$ is a singleton, the previous lemma specializes to Lemma 2.10 in \cite{Chun_Moffatt_Noble_Rueckriemen_2019}; our proof is essentially an inductive application of their argument.

\begin{prop}\label{prop:leaf_recursion_for_delta-matroids}
    Let $D=(E,\mathcal{F})$ be a normal delta-matroid with distinct $e_1,e_2\in E$ such that $\{e_1,e_2\}\in\mathcal{F}$ and 
    $$D\setminus e_2=(\{e_1\},\{\emptyset\})\oplus D\setminus\{e_1,e_2\}.$$
    Then 
    $$\T{D}(z)=\T{D\setminus e_1}(z)+2z^2 \T{D\setminus\{e_1,e_2\}}(z).$$
\end{prop}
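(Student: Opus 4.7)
The plan is to expand $\T{D}(z) = \sum_{A \subseteq E} z^{w(D*A)}$ using the identity $w(D*A) = w(D|_A) + w(D|_{A^c})$ from \autoref{lemma:width_of_twist_of_normal_delta_matroid}, then partition the sum by $A \cap \{e_1, e_2\}$ into four parts. Write $E' = E \setminus \{e_1, e_2\}$, $D' = D \setminus \{e_1, e_2\}$, and $A' = A \cap E'$. The structural hypothesis says that every feasible set of $D$ avoiding $e_2$ also avoids $e_1$ and coincides with a feasible set of $D'$, so $w(D|_B) = w(D'|_{B \cap E'})$ whenever $e_2 \notin B$. This immediately handles half of the widths appearing in each partial sum; the other half will be controlled by two structural claims.

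The crucial ingredient is these two claims, both proved via the symmetric exchange axiom (SEA) with the structural hypothesis forcing the witness. \textbf{Claim A}: if $F' \in \mathcal{F}$ with $e_1, e_2 \notin F'$, then $F' \cup \{e_1, e_2\} \in \mathcal{F}$. Apply SEA to $X = F'$ and $Y = \{e_1, e_2\}$ with $u = e_1 \in X \triangle Y$; the candidate witnesses $v = e_1$ and $v \in F'$ both produce feasible sets containing $e_1$ but not $e_2$, contradicting the hypothesis, so only $v = e_2$ remains, giving the claim. \textbf{Claim B}: if $F \in \mathcal{F}$ with $e_1, e_2 \in F$, then $F \setminus \{e_1, e_2\} \in \mathcal{F}$. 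Apply SEA to $X = F$ and $Y = \emptyset$ with $u = e_2 \in X \triangle Y$; the candidates $v = e_2$ and $v \in F \setminus \{e_1, e_2\}$ again yield forbidden feasible sets, forcing $v = e_1$.

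Claims A and B combine to give the key equality $w(D|_{B \cup \{e_1, e_2\}}) = 2 + w(D'|_B)$ for every $B \subseteq E'$. Claim A furnishes the $\geq$ direction by extending a maximum-size feasible set of $D'|_B$ by $\{e_1, e_2\}$. For $\leq$, take a feasible $F \subseteq B \cup \{e_1, e_2\}$ and split by $F \cap \{e_1, e_2\}$. The cases $e_2 \notin F$ and $\{e_1, e_2\} \subseteq F$ are immediate from the structural hypothesis and Claim B respectively. The remaining case $e_2 \in F$, $e_1 \notin F$ requires one more SEA application to $X = F$, $Y = \emptyset$ with $u = e_2$: this produces either $F \setminus \{e_2\} \in \mathcal{F}(D')$ or $F \setminus \{e_2, v\} \in \mathcal{F}(D')$ for some $v \in F \setminus \{e_2\}$, either of which bounds $|F|$ by $w(D'|_B) + 2$.

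The rest is bookkeeping. After substituting the key equality and relabeling $A' \leftrightarrow E' \setminus A'$ where appropriate, the two partial sums corresponding to $|A \cap \{e_1, e_2\}| \in \{0, 2\}$ combine to $2z^2 \T{D \setminus \{e_1, e_2\}}(z)$. The two partial sums corresponding to $|A \cap \{e_1, e_2\}| = 1$ can be recognized as precisely the two-way expansion of $\T{D \setminus e_1}(z)$ obtained by splitting $\sum_{A \subseteq E \setminus e_1}$ according to whether $e_2 \in A$; adding yields the desired identity. The main obstacle is proving Claims A and B, since SEA is purely existential and one must carefully invoke the structural hypothesis to exclude every witness except the intended one.
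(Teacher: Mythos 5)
Your proposal is correct, and its skeleton coincides with the paper's: both partition the sum over $A\subseteq E$ according to $A\cap\{e_1,e_2\}$, identify the contribution of the subsets meeting $\{e_1,e_2\}$ in exactly one element with $\T{D\setminus e_1}(z)$ (your Parts 3 and 4 are exactly the paper's bijection $\sigma_1$), and reduce the remaining contribution to $2z^2\,\T{D\setminus\{e_1,e_2\}}(z)$ via a width shift by $2$; your Claims A and B are precisely the two Symmetric Exchange arguments in the paper, where every witness other than the intended one would produce a feasible set containing $e_1$ but not $e_2$. The genuine difference is in how the width shift $w(D|_{B\cup\{e_1,e_2\}})=w(D\setminus\{e_1,e_2\}|_B)+2$ is established. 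The paper expresses widths of restrictions through the minimum symmetric-difference distance to a feasible set (\autoref{lemma:wdith_of_restriction}) and therefore needs \autoref{lemma:symmetric_difference_distance} to produce a minimizer containing both $e_1$ and $e_2$ before the analogue of your Claim B can be applied; it also gets the factor of $2$ from the symmetry $w(D*B)=w(D*B^c)$ rather than by computing both halves. You instead use the characterization $w(D|_A)=\max\{|F|:F\in\mathcal{F},\ F\subseteq A\}$, valid for normal delta-matroids, and run a case analysis on $F\cap\{e_1,e_2\}$ for a maximum feasible subset; this costs one extra Symmetric Exchange application for the case $e_2\in F$, $e_1\notin F$ (which the hypothesis does not exclude, and which you correctly handle), but removes the dependence on \autoref{lemma:symmetric_difference_distance} entirely. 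Both routes are sound; yours is slightly more elementary and self-contained for this proposition, while the paper's auxiliary lemma is stated in a generality that goes beyond what this single application requires.
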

\begin{proof}
    Consider
    \[
    \tau_1=\{B\subseteq E:B\text{ contains exactly one of }e_1,e_2\}
    \]
    and
    \[
    \tau_2=\{B\subseteq E:B\text{ contains both or neither }e_1,e_2\}.
    \]
    Then  $\tau_1$ and $\tau_2$ partition the subsets of $E$, so
    $$\T{D}(z)=\sum_{B\in \tau_1}z^{w(D*B)}+\sum_{B\in\tau_2}z^{w(D*B)}.$$
Define the bijection $\sigma_1:\mathcal{P}(E(D\setminus e_1))\to\tau_1$ by
$$\sigma_1(A)=\begin{cases}
    A & \text{if } e_2\in A \\
    A\cup e_1 & \text{if }e_2\not\in A
\end{cases}$$
for $A\subseteq E(D\setminus e_1)$. Then if $e_2\in A$, by \autoref{lemma:width_of_twist_of_normal_delta_matroid},
$$w(D*\sigma_1(A))=w(D|_{\sigma_1(A)})+w(D|_{\sigma_1(A)^c})=w((D\setminus e_1)|_A)+w(D|_{A^c\cup e_1})$$
and $w(D|_{A^c\cup e_1})=w(D|_{A^c})=w((D\setminus e_1)|_{A^c})$ since $e_2\not\in A^c$ implies that $e_1$ is a loop in $D|_{A^c\cup e_1}$. Thus $w(D*\sigma_1(A))=w((D\setminus e_1)*A)$. The case $e_2\not\in A$ is symmetric. Hence
\[
\sum_{B\in \tau_1}z^{w(D*B)}=\sum_{A\subseteq E(D\setminus e_1)}z^{w(D*\sigma_1(A))}=\sum_{A\subseteq E(D\setminus e_1)}z^{w((D\setminus e_1)*A)}=\T{D\setminus e_1}(z).
\]
Next, define the injection $\sigma_2:\mathcal{P}(E(D\setminus\{e_1,e_2\}))\to\tau_2$ by $\sigma_2(A)=A\cup\{e_1,e_2\}$ for $A\subseteq E(D\setminus\{e_1,e_2\})$. Then $w(D|_{{\sigma_2(A)}^c})=w((D\setminus\{e_1,e_2\})|_{A^c})$ and by \autoref{lemma:wdith_of_restriction}, 
\[
w(D|_{\sigma_2(A)})=|{\sigma_2(A)}|-\min\{|{\sigma_2(A)}\triangle F|:F\in \mathcal{F}\}.
\]
On the other hand,
$w((D\setminus\{e_1,e_2\})|_A)=|A|-\min\{|A\triangle F|:F\in\mathcal{F}\text{ and }e_1,e_2\not\in F\}$. We show that
\[
m_1\coloneq \min\{|A\triangle F|:F\in\mathcal{F}\text{ and }e_1,e_2\not\in F\}=\min\{|{\sigma_2(A)}\triangle F|:F\in \mathcal{F}\}\eqcolon m_2.
\]
Observe that for all $F\in\mathcal{F}$, if $e_2\not\in F$, then $e_1\not\in F$. Now, if $F\in\mathcal{F}$ and $e_1,e_2\not\in F$, then there exists $u\in F\triangle\{e_1,e_2\}$ such that $F\triangle\{e_1,u\}\in\mathcal{F}$. Moreover, $e_1\in F\triangle\{e_1,u\}$, so it must be that $e_2\in F\triangle\{e_1,u\}$, so $u=e_2$. Thus, $F\triangle\{e_1,e_2\}\in\mathcal{F}$, and
\[
|A\triangle F|=|{\sigma_2(A)}\triangle(F\triangle\{e_1,e_2\})|,
\]
so $m_1\geq m_2$.
On the other hand, by \autoref{lemma:symmetric_difference_distance}, there exists $F\in\mathcal{F}$ such that $e_1,e_2\in F$ and $|{\sigma_2(A)}\triangle F|=m_2$. Then there exists $u\in F\triangle\emptyset=F$ such that $F\triangle\{e_2,u\}\in\mathcal{F}$. But $e_2\not\in F\triangle\{e_2,u\}$, so $e_1\not\in F\triangle\{e_2,u\}$, which means $u=e_1$. Thus, $e_1,e_2\not\in F\triangle\{e_1,e_2\}\in\mathcal{F}$, and
\[
|{\sigma_2(A)}\triangle F|=|A\triangle(F\triangle\{e_1,e_2\})|,
\]
so $m_1\leq m_2$. Hence,
\[
w(D|_{\sigma_2(A)})=w((D\setminus\{e_1,e_2\})|_A)+2,
\]
so
\begin{align*}
    \sum_{B\in \tau_2}z^{w(D*B)}&=2\sum_{A\subseteq E(D\setminus \{e_1,e_2\})}z^{w(D*\sigma_2(A))}=2\sum_{A\subseteq E(D\setminus \{e_1,e_2\})}z^{w((D\setminus \{e_1,e_2\})*A)+2}=2z^2\T{D\setminus \{e_1,e_2\}}(z),
\end{align*}
where the first equality holds because for every $B\in\tau_2$, exactly one of $B$ and $B^c$ is in the image of $\sigma_2$, and $w(D*B)=w(D*B^c)$.
\end{proof}
Applying \autoref{prop:leaf_recursion_for_delta-matroids} to $D(I(B))$ for a bouquet $B$ recovers \autoref{prop:Yan_Jin_recurrence}.

\section{Concluding Remarks}
We have shown that the twist polynomial satisfies a recursion formula for the one-point join of two looped simple graphs (i.e., normal, binary delta-matroids). It would be interesting to investigate whether the same recursion formula holds for a broader class of delta-matroids than the class of binary delta-matroids. \autoref{prop:leaf_recursion_for_delta-matroids} is a positive result in this direction. An important consideration would be to analyze when the one-point join of set systems given in \autoref{def:set_system_one-point_join} is a delta-matroid. The formula in \autoref{thm:looped_recursion} involves the loop complementation operation on graphs, which was generalized to set systems by Brijder and Hoogeboom in \cite{Brijder_Hoogeboom_2011}. However, the class of delta-matroids is not closed under loop complementation (see Example 10 in \cite{Brijder_Hoogeboom_2011}). Thus, any extension of \autoref{thm:looped_recursion} may have to be constrained within the class of vf-safe delta-matroids, introduced in \cite{Brijder_Hoogeboom_2013} as the class of delta-matroids that remain delta-matroids under any sequence of twists and loop complementations; they properly contain the class of binary delta-matroids. 

Finally, we remark that one can derive the results in Section \ref{sec:The Twist Polynomial of a One-Point Join of Graphs} for the special case of signed intersection graphs using purely topological considerations of the boundary components of bouquets instead of dealing with matrices. In particular, the partitioning into $P_1$ and $P_2$ in \autoref{lemma:recursion_existence} has a simple topological interpretation. This was the initial approach taken by the author, and the progress there illuminated the generalization to looped simple graphs.

\vspace{3mm}
\noindent
\textbf{Acknowledgments}\\

The main results in this work were achieved during the summer 2024 Knots and Graphs undergraduate research program at Ohio State University, which was supported by the NSF-DMS \#2231565 RTG grant: Arithmetic, Combinatorics, and Topology of Algebraic Varieties. The author is grateful to Sergei Chmutov for organizing the program and for helpful conversations.

\bibliography{mybib}
\bibliographystyle{plain}

\end{document}